\documentclass[11pt]{amsproc}
\usepackage{amssymb,amsthm,anysize,enumerate,float,times,amsmath,hyperref,tikz,url}
\hypersetup{citecolor=red, linkcolor=blue, colorlinks=true}
\usetikzlibrary{arrows,shapes,positioning}

\theoremstyle{plain}
\newtheorem{theorem}{Theorem}[section]
\newtheorem{corollary}[theorem]{Corollary}
\newtheorem{proposition}[theorem]{Proposition}
\newtheorem{lemma}[theorem]{Lemma}

\theoremstyle{remark}
\newtheorem{remark}[theorem]{Remark}

\newcommand{\PG}{\textnormal{PG}}

\newcommand{\GammaL}{\text{$\Gamma$L}}
\newcommand{\F}{\mathbb{F}}

\newcommand{\GL}{\mathrm{GL}}
\newcommand{\SL}{\mathrm{SL}}
\newcommand{\PGL}{\mathrm{PGL}}

\newcommand{\PSL}{\mathrm{PSL}}

\newcommand{\Sp}{\mathrm{Sp}}

\renewcommand\le{\leqslant}
\renewcommand\ge{\geqslant}

\title[Generalised quadrangles and transitive pseudo-hyperovals]{Generalised quadrangles and transitive pseudo-hyperovals}
\author{John Bamberg, S. P. Glasby, Tomasz Popiel and Cheryl E. Praeger}

\address{
Centre for the Mathematics of Symmetry and Computation\\
School of Mathematics and Statistics\\
The University of Western Australia\\
35 Stirling Highway, Crawley, W.A. 6009, Australia.\newline
Email: \texttt{\{john.bamberg, stephen.glasby$^*$, tomasz.popiel, cheryl.praeger$^\dag$\}@uwa.edu.au}\\
\newline $^*$Also affiliated with The
Department of Mathematics, University of Canberra, Australia. 
\newline $^\dag$ Also affiliated with King Abdulaziz University, Jeddah, Saudi Arabia.
}

\subjclass[2010]{primary 51E12; secondary 20B15, 05B25, 51E20}

\keywords{generalised quadrangle, pseudo-hyperoval, primitive permutation group}

\begin{document}

\begin{abstract} 
A {\em pseudo-hyperoval} of a projective space $\PG(3n-1,q)$, $q$ even, is a set of $q^n+2$ subspaces of dimension $n-1$ such that any three span the whole space.   
We prove that a pseudo-hyperoval with an irreducible transitive stabiliser is elementary. 
We then deduce from this result a classification of the thick generalised quadrangles $\mathcal{Q}$ that admit a point-primitive, line-transitive automorphism group with a point-regular abelian normal subgroup. 
Specifically, we show that $\mathcal{Q}$ is flag-transitive and isomorphic to $T_2^*(\mathcal{H})$, where $\mathcal{H}$ is either the regular hyperoval of $\PG(2,4)$ or the Lunelli--Sce hyperoval of $\PG(2,16)$. 
\end{abstract}

\maketitle

\section{Introduction and results} \label{intro}

A {\em generalised quadrangle} $\mathcal{Q}$ of order $(s,t)$ is a point--line geometry such that each line (respectively point) is incident with exactly $s+1$ points (respectively $t+1$ lines), and such that the bipartite incidence graph of $\mathcal{Q}$ has diameter $4$ and girth $8$. 
Generalised quadrangles, together with the other {\em generalised polygons}, were introduced by Tits~\cite{Tits} in an attempt to find a systematic geometric interpretation for the simple groups of Lie type. 
Their importance arises from this connection, and is harnessed through the study of their automorphism groups. 
A topic of particular interest has been generalised quadrangles that admit a {\em point-regular} group of automorphisms, namely an automorphism group $N$ with the property that, for each pair of points $x$, $x'$, exactly one element of $N$ maps $x$ to $x'$. 
The study of such generalised quadrangles was initiated by Ghinelli~\cite{Ghinelli} in connection with the theory of difference sets. 
Ghinelli conjectured that a generalised quadrangle with $s=t$ cannot admit a point-regular automorphism group, and provided evidence for this conjecture by proving that a Frobenius group or a group with a nontrivial centre cannot act regularly on the points of such a generalised quadrangle if $s$ is even. 
Yoshiara~\cite{Yoshiara} proved that a generalised quadrangle with $s=t^2$ does not admit a point-regular automorphism group. 
Bamberg and Giudici~\cite{BambergGiudici} studied point-regular automorphism groups of some known generalised quadrangles, and in particular determined all groups that act point-regularly on the thick {\em classical} generalised quadrangles. 
Here {\em thick} means that $s \ge 2$ and $t \ge 2$, and this assumption excludes certain trivial examples. 

As in \cite{PenttilaVandeVoorde}, a {\em pseudo-hyperoval} is a set of $(n-1)$-dimensional subspaces of a projective space $\PG(3n-1,q)$, such that the set has cardinality $q^n+2$ and any three elements span the whole space. 
We note that pseudo-hyperovals are also called {\em generalized hyperovals} \cite{CoopThas}. 
For $n=1$, a pseudo-hyperoval is just a {\em hyperoval} of $\PG(2,q)$, namely a set of $q+2$ points such that no three lie on a line. 
De Winter and K.~Thas \cite{DeWinterThasRegular} showed that if a thick generalised quadrangle $\mathcal{Q}$ admits an {\em abelian} point-regular automorphism group $N$, then $\mathcal{Q}$ arises from a pseudo-hyperoval in the following sense: 
$N$ turns out to be the additive group of a vector space (and hence elementary abelian), and the setwise stabilisers in $N$ of the lines incident with a fixed point of $\mathcal{Q}$ are subspaces forming a pseudo-hyperoval (when $N$ is viewed projectively). 
Conversely, as explained in Section~\ref{corProof}, every pseudo-hyperoval gives rise to a generalised quadrangle by means of a geometric construction known as {\em generalised linear representation}. 

The aim of the present paper is to classify a family of pseudo-hyperovals that admit additional symmetry, and to thereby obtain a characterisation of the corresponding generalised quadrangles.
We begin by considering pseudo-hyperovals that admit a transitive automorphism group. 
Our first result says that for a thick generalised quadrangle $\mathcal{Q}$ with a point-regular abelian automorphism group, this condition is equivalent to transitivity on the flags (incident point--line pairs) of $\mathcal{Q}$.

\begin{theorem}\label{transitivepseudohyperoval}
Let $\mathcal{Q}$ be a thick generalised quadrangle of order $(s,t)$ admitting an automorphism group $H$ with an abelian normal subgroup $N$ that acts regularly on the points of $\mathcal{Q}$. 
Choose any point $x$ of $\mathcal{Q}$, let $\ell_1,\ldots,\ell_{t+1}$ be the lines incident with $x$, and, for each $i\in\{1,\ldots,t+1\}$, let $U_i := N_{\ell_i}$ be the setwise stabiliser of $\ell_i$ in $N$. 
Then the following are equivalent:
\begin{enumerate}[{\rm (i)}]
\item $H$ acts transitively on the lines of $\mathcal{Q}$;
\item $H$ acts transitively on the flags of $\mathcal{Q}$; 
\item $H$ acts transitively on the pseudo-hyperoval $\{U_1,U_2,\ldots,U_{t+1}\}$, by conjugation. 
\end{enumerate}
\end{theorem}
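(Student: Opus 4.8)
The plan is to route all three conditions through the auxiliary statement $(\star)$ that the point stabiliser $H_x$ acts transitively on the $t+1$ lines $\ell_1,\dots,\ell_{t+1}$ incident with $x$. The key preliminary observation is that the bijection $\ell_i\mapsto U_i=N_{\ell_i}$ between the lines through $x$ and the pseudo-hyperoval is $H_x$-equivariant, with $H_x$ acting on the left-hand side as a permutation group and on the right-hand side by conjugation. Indeed, for any $h\in H$ we have $hU_ih^{-1}=h(N\cap H_{\ell_i})h^{-1}=N\cap H_{h(\ell_i)}=N_{h(\ell_i)}$ because $N\trianglelefteq H$, and when $h$ fixes $x$ the line $h(\ell_i)$ is again one of the $\ell_j$. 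Moreover, since $N$ is transitive on points and normal in $H$ we have $H=NH_x$, and since $N$ is abelian it acts trivially by conjugation on its own subgroups; hence the conjugation action of $H$ on $\{U_1,\dots,U_{t+1}\}$ coincides with that of $H_x$. Combining these, (iii) is equivalent to $(\star)$.

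Next I would show (ii) $\Leftrightarrow$ $(\star)$. If $H$ is flag-transitive then, given lines $\ell_i,\ell_j$ through $x$, any $h\in H$ with $h(x,\ell_i)=(x,\ell_j)$ necessarily fixes $x$, so $h\in H_x$ and $h(\ell_i)=\ell_j$; this gives $(\star)$. Conversely, assume $(\star)$; since $N\le H$ is point-transitive, any flag can be moved by an element of $H$ to one of the form $(x,\ell_i)$, and then $(\star)$ moves $(x,\ell_i)$ to $(x,\ell_j)$ for any $j$, so $H$ is flag-transitive. Also (ii) $\Rightarrow$ (i) is immediate, as the projection of flags onto lines is $H$-equivariant and surjective.

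The remaining and only substantial implication is (i) $\Rightarrow$ (ii). Here I would invoke the De Winter--Thas structure recalled in the introduction: $N$ is the additive group of $V\cong\F_q^{3n}$, each $U_i$ is an $(n-1)$-dimensional projective subspace, and (the point stabilisers in $N$ being trivial, while a direct order count gives $|U_i|=q^n=s+1$) the group $U_i$ acts \emph{regularly} on the $s+1$ points of $\ell_i$. Now suppose $H$ is line-transitive and let $m$ be any line. Choosing $h\in H$ with $h(\ell_1)=m$, we find that $N_m=hU_1h^{-1}$ is a subgroup of $H_m$ acting on the points of $m$ exactly as $U_1$ acts on the points of $\ell_1$, hence regularly. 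Thus every line stabiliser is transitive on the points of its line, and combined with line-transitivity of $H$ this yields flag-transitivity by the standard two-step argument: given flags $(y,m)$ and $(y',m')$, first move $m$ to $m'$, then move the image of $y$ to $y'$ within $H_{m'}$.

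The only place where more than routine orbit bookkeeping enters is this last implication, and the point to stress is \emph{why} it works: line-transitivity does not imply flag-transitivity in a general incidence geometry, but here each line stabiliser already contains a subgroup, namely $N_m$, that is regular on the points of that line — a direct consequence of the pseudo-hyperoval structure. I therefore expect the write-up to be short, with the De Winter--Thas input (and the regularity of $U_i$ on $\ell_i$ that it yields) carrying the weight, and the equivalences (ii) $\Leftrightarrow$ (iii) and (ii) $\Rightarrow$ (i) being elementary.
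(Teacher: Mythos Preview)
Your proof is correct and follows essentially the same route as the paper: the equivalences (ii)$\Leftrightarrow$(iii) and (ii)$\Rightarrow$(i) are treated as routine (the paper absorbs your auxiliary condition $(\star)$ and the $H_x$-equivariance of $\ell_i\mapsto U_i$ into the discussion preceding the proof), and the substantive step (i)$\Rightarrow$(ii) is handled identically, by using $N\trianglelefteq H$ and line-transitivity to see that every $N_m$ has size $|U_i|=s+1$ and hence acts transitively on the points of $m$. The only cosmetic difference is that the paper phrases this via ``all $N$-orbits on lines have equal size'' rather than your explicit conjugation $N_m=hU_1h^{-1}$, but these are the same observation.
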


Theorem~\ref{transitivepseudohyperoval} is proved in Section~\ref{prelim}. 
It implies that a classification of transitive pseudo-hyperovals would yield a classification of the generalised quadrangles that admit a line-transitive automorphism group with a point-regular abelian normal subgroup, and, moreover, that such generalised quadrangles are, in fact, flag-transitive. 
By a result of J.~A. Thas \cite[\S4.5]{Thas71}, if a projective space $\PG(3n-1,q)$ contains a pseudo-hyperoval, then $q=2^f$ for some positive integer $f$. 
For small values of the product $nf$, we appeal to some existing results to classify the transitive pseudo-hyperovals of $\PG(3n-1,2^f)$, beginning with Korchmaros'~\cite{Korch} classification of transitive hyperovals of $\PG(2,q)$. 
We obtain the following result. 

\begin{proposition} \label{propn:elementary}
Suppose that $nf\le 4$. Then every pseudo-hyperoval of $\PG(3n-1,2^f)$ is elementary. 
Moreover, every transitive pseudo-hyperoval of $\PG(3n - 1, 2^f)$ is the field-reduced image of a transitive hyperoval of $\PG(2,2^{nf})$, namely of either
\begin{enumerate}[{\rm (i)}]
\item the regular hyperoval of $\PG(2,2)$ or $\PG(2,4)$, or 
\item the Lunelli--Sce hyperoval of $\PG(2,16)$.
\end{enumerate}
\end{proposition}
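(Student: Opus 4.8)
The plan is to check directly the finitely many pairs $(n,f)$ with $nf\le 4$, namely $(1,1),(1,2),(1,3),(1,4),(2,1),(2,2),(3,1)$ and $(4,1)$, for which $q^n=2^{nf}\in\{2,4,8,16\}$. When $n=1$ a pseudo-hyperoval is by definition a hyperoval of $\PG(2,q)$, which is trivially its own field-reduced image, so the first assertion is immediate; and Korchm\'{a}ros's classification of transitive hyperovals \cite{Korch} shows that a transitive hyperoval of $\PG(2,2^f)$ with $f\le 4$ is the regular hyperoval of $\PG(2,2)$ or $\PG(2,4)$, or the Lunelli--Sce hyperoval of $\PG(2,16)$; in particular $\PG(2,8)$ has no transitive hyperoval. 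This settles the cases with $n=1$.

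For $n\ge 2$ the spaces to treat are $\PG(5,2)$, $\PG(5,4)$, $\PG(8,2)$ and $\PG(11,2)$, in which a pseudo-hyperoval has $6$, $18$, $10$ and $18$ members respectively. For each of these I would invoke the existing enumeration of (generalized) hyperovals in small spaces \cite{Thas71,CoopThas}: in every case each pseudo-hyperoval is elementary, which completes the proof of the first assertion. Granting this, a transitive pseudo-hyperoval $\mathcal O$ of $\PG(3n-1,q)$ with $n\ge 2$ is the field-reduced image of a hyperoval $\mathcal H$ of $\PG(2,q^n)$, the members of $\mathcal O$ arising from the points of $\mathcal H$ by field reduction with respect to a Desarguesian $(n-1)$-spread $\mathcal D$ of $\PG(3n-1,q)$.

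The crucial remaining step is to push transitivity from $\mathcal O$ down to $\mathcal H$. For this one shows that every collineation of $\PG(3n-1,q)$ stabilising $\mathcal O$ also stabilises $\mathcal D$ --- exploiting the rigidity of the configuration (for $q>2$ because $\mathcal D$ is recovered from $\mathcal O$ by a regulus-closure process; for $q=2$ by direct inspection of the handful of explicit possibilities) --- and hence induces a collineation of $\PG(2,q^n)$ that stabilises $\mathcal H$ and acts on $\mathcal H$ exactly as the original collineation acts on $\mathcal O$. Thus $\mathcal H$ is a transitive hyperoval, and Korchm\'{a}ros \cite{Korch} applies once more: since $q^n\in\{4,8,16\}$ in these cases, $\mathcal H$ is the regular hyperoval of $\PG(2,4)$ (for $\mathcal O$ in $\PG(5,2)$) or the Lunelli--Sce hyperoval of $\PG(2,16)$ (for $\mathcal O$ in $\PG(5,4)$ or $\PG(11,2)$), while $\PG(8,2)$ contributes no transitive pseudo-hyperoval because $\PG(2,8)$ has none. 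Together with the case $n=f=1$ this is exactly the list in the statement; alternatively one may read the transitive examples off directly from the cited classifications, which record the full automorphism groups as well.

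I expect two main obstacles. The first is assembling and trusting the classification input for $n\ge 2$, in particular for $\PG(5,4)$ and $\PG(11,2)$ where a pseudo-hyperoval has $18$ members and a hands-on enumeration is delicate, so that a computer search may be unavoidable. The second, and more conceptual, is the descent step: one must rule out a collineation of the ambient space that stabilises $\mathcal O$ but does not respect the subfield $\F_{q^n}$ (equivalently, does not stabilise $\mathcal D$), since such a map could endow $\mathcal O$ with a transitivity not visible on $\mathcal H$; the degenerate case $q=2$, where the regulus-closure argument breaks down, is where the most care is needed.
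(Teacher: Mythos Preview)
Your overall strategy for the first assertion is close to the paper's, but two points need correction. First, the references you cite for the classification of pseudo-hyperovals in $\PG(5,2)$, $\PG(5,4)$, $\PG(8,2)$, $\PG(11,2)$ are not right: neither \cite{Thas71} nor \cite{CoopThas} contains such an enumeration. The paper instead first field-reduces everything to $f=1$ (so the $\PG(5,4)$ case is absorbed into $\PG(11,2)$), then invokes Penttila's classification of pseudo-\emph{ovals} in $\PG(3m-1,2)$ for $m\le 4$ together with Thas' theorem \cite[\S4.10]{Thas71} that in even characteristic a pseudo-oval extends to a \emph{unique} pseudo-hyperoval. This produces the list of pseudo-hyperovals and shows they are all elementary.

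The real gap is exactly what you flag as your second obstacle: the descent step. You want every collineation stabilising $\mathcal O$ to stabilise the Desarguesian spread $\mathcal D$, but your proposed justification---regulus-closure for $q>2$, ``direct inspection'' for $q=2$---is not an argument, and the paper does \emph{not} attempt to prove this. Its approach is entirely different and sidesteps descent altogether. For $nf\in\{3,4\}$ the only pseudo-hyperoval whose transitivity is in doubt is the pseudo-hyperconic, and its stabiliser already contains the field-reduced image of $\PSL(2,2^{nf})$, which is $3$-transitive on the $2^{nf}+1$ conic elements and fixes the nucleus element. Hence if the full stabiliser of $\mathcal O$ were transitive it would be $4$-transitive on $2^{nf}+2$ points, forcing it to induce $A_m$ or $S_m$ with $m=10$ or $18$. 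This is then excluded: for $m=18$ because $A_{18}$ has no faithful representation in $\PGL(12,2)$ \cite{WagnerEven}; for $m=10$ by mapping the stabiliser into $\mathrm{Aut}(T_2^*(\mathcal H))$, whose structure is known explicitly \cite{Grundhofer} and whose order is not divisible by $|A_{10}|$. Your descent argument, if it could be made rigorous over $\F_2$, would give an alternative route, but as written it is not a proof.
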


Here {\em elementary} means that the pseudo-hyperoval of $\PG(3n-1,2^f)$ is the image of a hyperoval of $\PG(2,2^{nf})$ under field reduction, as explained in the first paragraph of Section~\ref{sec3}, and a {\em regular hyperoval} (or {\em hyperconic}) is the completion of a conic to a hyperoval by addition of its nucleus. 
For details on the Lunelli--Sce hyperoval, we refer the reader to the article by Brown and Cherowitzo \cite{BrownCherowitzo}. 

Proposition~\ref{propn:elementary} is proved in Section~\ref{sec3}.
For $nf>4$, we instead investigate the subgroups of $\GL(3n,2^f)$ that act transitively on a set of $2^{nf}+2$ subspaces of dimension $n$. 
As explained in Section~\ref{sec4}, the order of such a subgroup must be divisible by the largest primitive divisor of $2^e-1$, where $e=2d/3-2$, namely the largest divisor of $2^e-1$ that does not divide $2^i-1$ for any $i<e$. 
The subgroups with this property can be determined from a result of Bamberg and Penttila~\cite[Theorem 3.1]{Bamberg:2008rr} by considering each of Aschbacher's \cite{Aschbacher} geometric classes of maximal subgroups of the general linear group.  
However, computational experiments suggest that there may be a very large number of examples in the case where the subgroup fixes a proper subspace of the underlying vector space:

\begin{remark} \label{eg}
{\sf GAP} \cite{gap} computations show that the pseudo-hyperoval of $\PG(11,2)$ obtained by field reduction of the Lunelli--Sce hyperoval of $\PG(2,16)$ has stabiliser with $14$ transitive subgroups, with half of these subgroups fixing a $5$-dimensional projective subspace. 
Details are given in Appendix~\ref{app}. 
\end{remark}

We therefore narrow our search to pseudo-hyperovals with an {\em irreducible} transitive stabiliser. 
We prove in Section~\ref{sec4} that for $nf>4$ there exists no irreducible subgroup of $\GL(3n,2^f)$ that acts transitively of degree $2^{nf}+2$, thereby proving that for $nf>4$ there is no pseudo-hyperoval of $\PG(3n-1,2^f)$ with an irreducible transitive stabiliser. Together with Proposition~\ref{propn:elementary} , this gives our main theorem:

\begin{theorem} \label{mainthm}
Suppose that $\PG(3n-1,2^f)$ contains a pseudo-hyperoval $\mathcal{O}$ admitting an irreducible transitive group of automorphisms. Then $nf\le 4$ and $\mathcal{O}$ is the field-reduced image of either
\begin{enumerate}[{\rm (i)}]
\item the regular hyperoval of $\PG(2,2)$ or $\PG(2,4)$, or 
\item the Lunelli--Sce hyperoval of $\PG(2,16)$.
\end{enumerate}
\end{theorem}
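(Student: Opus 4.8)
The plan is to deduce Theorem~\ref{mainthm} by treating the cases $nf\le 4$ and $nf>4$ separately. When $nf\le 4$ there is nothing further to prove: Proposition~\ref{propn:elementary} already asserts that every pseudo-hyperoval of $\PG(3n-1,2^f)$ is elementary and that the transitive ones are precisely the field-reduced images of the regular hyperovals of $\PG(2,2)$ and $\PG(2,4)$ and of the Lunelli--Sce hyperoval of $\PG(2,16)$. So the content lies in the case $nf>4$, where the aim is to show that $\GL(3n,2^f)$ has no irreducible subgroup $G$ acting transitively on a set $\mathcal{O}$ of $q^n+2$ subspaces of dimension $n$, where $q=2^f$; passing from the possibly semilinear stabiliser of a pseudo-hyperoval in $\PGammaL(3n,2^f)$ to its linear part is a routine reduction, since it will turn out that the relevant prime does not divide $f=|\PGammaL(3n,2^f):\PGL(3n,2^f)|$. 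Suppose for a contradiction that such a $G$ exists.

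The first step is to extract a large primitive prime divisor from the transitivity of $G$. Since $q^n+2=2^{nf}+2=2(2^{nf-1}+1)$, put $e:=2nf-2$; then $e\ge 8$ because $nf\ge 5$, so $e\ne 6$ and Zsigmondy's theorem supplies a primitive prime divisor $r$ of $2^e-1$. As $r\mid 2^{nf-1}+1$ it divides $q^n+2$, hence it divides $|G|$ because $G$ is transitive of degree $q^n+2$. Primitivity gives $\mathrm{ord}_r(2)=e$, so $r$ divides no $2^i-1$ with $i<e$ -- in particular $r$ divides the order of no $\GL(k,2^{f'})$ with $kf'<e$ -- and $r\ge e+1=2nf-1$. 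With respect to the embedding $\GL(3n,2^f)\hookrightarrow\GL(3nf,2)$ the prime $r$ is thus a \emph{large} primitive prime divisor in the sense of \cite{Bamberg:2008rr}, since $e=\tfrac{2}{3}\cdot 3nf-2$ exceeds half of $3nf$ precisely when $nf>4$. (That $e=6$ exactly when $nf=4$ is why the borderline value $nf=4$ is covered by Proposition~\ref{propn:elementary} and not by this argument.)

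Next I would feed this into the classification \cite[Theorem~3.1]{Bamberg:2008rr} of large primitive prime divisor subgroups, working through Aschbacher's classes $\calC_1,\dots,\calC_8$ and $\mathcal{S}$ for $\GL(3n,2^f)$ (see \cite{Aschbacher}). Irreducibility of $G$ excludes $\calC_1$. For $\calC_2$ (imprimitive), a counting argument shows that $r$ must divide $t!$ for the relevant $t\mid 3n$, forcing $2nf-1\le r\le 3n$; this leaves only cases with $nf\le 4$ together with the possibility that $q=2$ and $G$ is monomial -- but a group of monomial matrices over $\F_2$ fixes the all-ones vector, contradicting irreducibility. In the tensor, tensor-induced and extraspecial-normaliser classes $\calC_4,\calC_6,\calC_7$ the multiplicative constraints on $3n$ together with the order bounds of \cite{Bamberg:2008rr} are incompatible with divisibility of $|G|$ by so large an $r$, and in the subfield and classical classes $\calC_5,\calC_8$ the group $G$ lies in the normaliser of a classical group such as $\GU(3n,q_0)$ or $\Sp(3n,q_0)$, none of which is transitive on $q^n+2$ points once $nf>4$ (equivalently, $q^n+2$ does not divide the pertinent order). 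The substantive class is $\calC_3$ (field extension): here $G$ stabilises an $\F_{q^b}$-structure for a prime $b\mid 3n$, so $G\le\GL(3n/b,q^b)\rtimes\Z_b$, and since $r>b$ a Sylow $r$-subgroup lies in $\GL(3n/b,q^b)$; iterating the analysis on this smaller group either excludes $G$ outright -- for instance the Singer case $3n/b=1$ cannot occur, as $r$ divides neither $2^{3nf}-1$ nor $3nf$ when $nf>4$ -- or returns to a class already handled, or terminates in dimension $3$ over $\F_{2^{nf}}$, where $\mathcal{O}$ is the field-reduced image of a hyperoval $\mathcal{H}$ of $\PG(2,2^{nf})$ on which $G$ induces a transitive group. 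Korchmaros' classification of transitive hyperovals \cite{Korch} then forces $\mathcal{H}$ to be regular of $\PG(2,2)$ or $\PG(2,4)$, or Lunelli--Sce of $\PG(2,16)$, so that $2^{nf}\le 16$, i.e.\ $nf\le 4$ -- contradicting our standing assumption.

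The remaining case, and the one I expect to be the main obstacle, is the almost simple class $\mathcal{S}$: one must go through the list -- based on the classification of finite simple groups -- of quasisimple groups admitting an irreducible $\F_{2^f}$-representation of dimension $3n$ with order divisible by the large primitive prime divisor $r$, and verify in each case that $q^n+2$ does not divide the group order, or that the group is not transitive of that degree. Once $\mathcal{S}$ is ruled out, no irreducible transitive $G$ can exist when $nf>4$, so no pseudo-hyperoval of $\PG(3n-1,2^f)$ has an irreducible transitive stabiliser in that range; combined with Proposition~\ref{propn:elementary}, this establishes Theorem~\ref{mainthm}.
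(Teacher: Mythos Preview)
Your overall strategy matches the paper's: handle $nf\le4$ via Proposition~\ref{propn:elementary}, and for $nf>4$ use that $|G|$ is divisible by the primitive part of $2^e-1$ with $e=2nf-2$, then invoke \cite[Theorem~3.1]{Bamberg:2008rr}. The paper first field-reduces to $\GL(d,2)$ with $d=3nf$ (so that $e>d/2$ exactly when $nf>4$) and then argues \emph{purely group-theoretically}: the only question is whether $G$ possesses a subgroup of index $2^{d/3}+2$, never what the $G$-set $\mathcal{O}$ looks like.

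The genuine gap is in your extension-field case~$\mathcal{C}_3$. You assert that when the iteration ``terminates in dimension~$3$ over $\F_{2^{nf}}$'' the pseudo-hyperoval $\mathcal{O}$ is the field-reduced image of a hyperoval $\mathcal{H}$ of $\PG(2,2^{nf})$, and then appeal to Korchm\'aros. But the fact that $G$ preserves an $\F_{2^{nf}}$-structure on the ambient space says nothing about the individual elements $U_i\in\mathcal{O}$: these are $n$-dimensional $\F_{2^f}$-subspaces permuted by $G$, and there is no reason they should be $\F_{2^{nf}}$-subspaces, so no hyperoval $\mathcal{H}$ materialises and Korchm\'aros' theorem does not apply. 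The paper handles $\mathcal{C}_3$ differently: writing $\hat G=G\cap\GL(d/b,2^b)$ and $\hat H=H\cap\GL(d/b,2^b)$ for a point stabiliser $H$, one has $|\hat G:\hat H|=(2^{d/3}+2)/x$ with $x\mid b$ coprime to $\Phi_e^*(2)$, and then the classical sub-case is killed by the minimum-index bound \cite[Lemma~7.1]{Bamberg:2008rr}, which gives
\[
\frac{2^{d/3}+2}{x}\;>\;\frac{(2^{d/2}+1)(2^{d/2-1}-1)}{2^b-1},
\]
false for $d>12$. The same lemma is what the paper uses for the top-level classical case~$\mathcal{C}_8$, where your ``$q^n+2$ does not divide the pertinent order'' is not the right test (it generally \emph{does} divide). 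Finally, the nearly-simple class you flag as ``the main obstacle'' is in fact short work in the paper: the tables of \cite[Theorem~3.1]{Bamberg:2008rr} leave only a handful of $(e,d)$ pairs with $e=2d/3-2$ and $d>12$ (all in the deleted-permutation-module family for $A_m$), each eliminated by checking that $A_m$ has no subgroup of index $2^{d/3}+2$ or $(2^{d/3}+2)/2$.
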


Our final result is a corollary to Theorem~\ref{mainthm}, classifying the corresponding generalised quadrangles. 
As explained in Section~\ref{corProof}, for $\mathcal{Q}$ as in Theorem~\ref{transitivepseudohyperoval}, the condition that the stabiliser of the pseudo-hyperoval $\{ U_1,\ldots,U_{t+1} \}$ is irreducible is equivalent to the condition that $H$ acts {\em primitively} on the points of $\mathcal{Q}$, namely that it preserves no nontrivial partition of the points. 
Point-primitivity is a natural assumption to make in the context of the existing literature, notably recent papers of Schneider and Van Maldeghem~\cite{SvM} and Bamberg et~al.~\cite{BGMRS} which classify point-primitive, line-primitive, flag-transitive generalised polygons. 
If $\PG(2,q)$, $q$ even, contains a hyperoval $\mathcal{H}$, then one can construct from $\mathcal{H}$ a generalised quadrangle $T_2^*(\mathcal{H})$ of order $(q-1,q+1)$ by embedding $\PG(2,q)$ as a hyperplane $\Pi$ of $\PG(3,q)$ and declaring the `points' to be the affine points (those not in $\Pi$) and the `lines' to be the affine lines meeting $\Pi$ in an element of $\mathcal{H}$, with natural incidence \cite{Hall71}. 
We prove the following in Section~\ref{corProof}.

\begin{corollary} \label{mainCorollary}
Let $\mathcal{Q}$ be a thick generalised quadrangle admitting an automorphism group that is point-primitive, line-transitive and has a point-regular abelian normal subgroup. 
Then $\mathcal{Q}$ is isomorphic to $T_2^*(\mathcal{H})$, where $\mathcal{H}$ is either
\begin{enumerate}[{\rm (i)}]
\item the regular hyperoval of $\PG(2,4)$, or
\item the Lunelli--Sce hyperoval of $\PG(2,16)$.
\end{enumerate}
\end{corollary}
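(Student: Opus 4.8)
The plan is to combine Theorem~\ref{transitivepseudohyperoval}, Theorem~\ref{mainthm}, and the structural result of De Winter and K.~Thas~\cite{DeWinterThasRegular} into a single deduction. Let $H$ be the given point-primitive, line-transitive automorphism group with point-regular abelian normal subgroup $N$. First I would invoke De Winter--K.~Thas to conclude that $N$ is elementary abelian, say $N \cong (\F_q)^{3n}$ for a prime power $q$, that $\mathcal{Q}$ is a generalised linear representation, and that the line-stabilisers $U_1,\ldots,U_{t+1}$ in $N$ through a fixed point $x$ form a pseudo-hyperoval $\mathcal{O} = \{U_1,\ldots,U_{t+1}\}$ of $\PG(3n-1,q)$; by J.~A. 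Thas's constraint cited in the introduction, $q = 2^f$. So $\mathcal{Q}$ has order $(s,t) = (q^n - 1, q^n + 1)$ and $\mathcal{O}$ has $q^n + 2 = t+1$ elements.

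Next I would identify which symmetry of $\mathcal{O}$ the hypotheses on $H$ translate into. Since $H$ is line-transitive, Theorem~\ref{transitivepseudohyperoval} (applied in the direction (i)$\Rightarrow$(iii)) gives that $H$ acts transitively by conjugation on $\mathcal{O}$; in particular the stabiliser $H_x$ acts transitively on $\mathcal{O}$ as a subgroup of $\mathrm{\Gamma L}(3n,q)$ (acting on $N$ and hence projectively on $\PG(3n-1,q)$). The remaining point is that the irreducibility of this action on the ambient vector space $N$ is precisely the point-primitivity of $H$: an $H_x$-invariant nontrivial subspace $W \le N$ would, by translation under $N$, yield an $H$-invariant partition of the point set into cosets of $W$, and conversely any nontrivial $H$-invariant partition refines to one whose block through $x$ corresponds to such a $W$ (this equivalence is exactly what is promised in Section~\ref{corProof} of the excerpt, so I would simply cite it). Hence $\mathcal{O}$ admits an irreducible transitive group of automorphisms, and Theorem~\ref{mainthm} applies: $nf \le 4$ and $\mathcal{O}$ is the field-reduced image of either the regular hyperoval of $\PG(2,2)$ or $\PG(2,4)$, or the Lunelli--Sce hyperoval of $\PG(2,16)$.

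It then remains to translate the three surviving hyperovals back into generalised quadrangles and to discard the $\PG(2,2)$ case. For the regular hyperoval of $\PG(2,2)$ we have $q^n = 2$, so $s = q^n - 1 = 1$, which violates thickness; hence this case is excluded and only $\PG(2,4)$ and $\PG(2,16)$ survive. For these, I would argue that the generalised linear representation built from the field-reduced image of a hyperoval $\mathcal{H}$ of $\PG(2,2^{nf})$ is isomorphic to the generalised quadrangle $T_2^*(\mathcal{H})$ built directly from $\mathcal{H}$ in $\PG(3,2^{nf})$: both have order $(2^{nf}-1,\,2^{nf}+1)$, both are coset geometries on the same elementary abelian group $N \cong (\F_{2^{nf}})^3 \cong (\F_{2^f})^{3n}$, and the field-reduction map identifies the defining families of subspaces, so the incidence structures coincide. (This compatibility of field reduction with the linear representation construction is standard — see e.g.\ \cite{PenttilaVandeVoorde} — so I would cite rather than reprove it.) Combining, $\mathcal{Q} \cong T_2^*(\mathcal{H})$ with $\mathcal{H}$ the regular hyperoval of $\PG(2,4)$ or the Lunelli--Sce hyperoval of $\PG(2,16)$, as claimed.

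The main obstacle I anticipate is not any of the group-theoretic steps — those are immediate from the quoted theorems — but rather making the final identification $\mathcal{Q} \cong T_2^*(\mathcal{H})$ genuinely precise: one must check that the \emph{abstract} generalised quadrangle recovered from the pseudo-hyperoval via the generalised linear representation is isomorphic, as a point--line geometry, to the \emph{concrete} $T_2^*(\mathcal{H})$, and that the hyperoval $\mathcal{H}$ recovered is (up to the action of $\PGammaL(3,2^{nf})$, hence up to isomorphism of the resulting quadrangle) the named one rather than merely some hyperoval with the same parameters. Since for $q \in \{4,16\}$ the regular and Lunelli--Sce hyperovals are, respectively, the unique hyperovals of $\PG(2,4)$ and the unique (up to equivalence) hyperoval of $\PG(2,16)$ with a transitive stabiliser arising here, this uniqueness closes the gap; but spelling out the isomorphism of geometries is the one place the proof requires care rather than citation.
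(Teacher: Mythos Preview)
Your proposal is correct and follows essentially the same route as the paper: De Winter--K.~Thas for the pseudo-hyperoval structure, Theorem~\ref{transitivepseudohyperoval} for transitivity on $\mathcal{O}$, point-primitivity $\Leftrightarrow$ irreducibility, then Theorem~\ref{mainthm}, then discard $\PG(2,2)$ by thickness.

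The one place you diverge is your final ``obstacle'': you treat the identification $T_{3n-1,n-1}^*(\mathcal{O}) \cong T_2^*(\mathcal{H})$ as something to be argued ad~hoc and cite~\cite{PenttilaVandeVoorde}, whereas the paper has already isolated this as Corollary~\ref{cor:Tstar} (via Lemma~\ref{lem:Tstar}, from~\cite{Vanhove}), and uses \cite[Lemma~1]{HM61} for the coset-geometry identification $\mathcal{Q} \cong T_{3n-1,n-1}^*(\mathcal{O})$. Your worry about whether $\mathcal{H}$ is ``the named one rather than merely some hyperoval with the same parameters'' is unfounded: Theorem~\ref{mainthm} does not merely bound $nf$, it asserts that $\mathcal{O}$ \emph{is} the field-reduced image of one of three explicitly named hyperovals, so $\mathcal{H}$ is determined and no separate uniqueness argument is needed.
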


Note that the regular hyperoval of $\PG(2,2)$ does not appear in Corollary~\ref{mainCorollary} because it does not yield a {\em thick} generalised quadrangle (see Section~\ref{corProof}). 
Note also that if $\mathcal{H}$ is the regular hyperoval of $\PG(2,4)$, then $T_2^*(\mathcal{H})$ is the unique generalised quadrangle of order $(3,5)$ \cite[6.2.4]{FGQ}.

\section{Proof of Theorem~\ref{transitivepseudohyperoval}} \label{prelim}

Let $\mathcal{Q}$ be a thick generalised quadrangle of order $(s,t)$. 
Then $\mathcal{Q}$ is a partial linear space (a point--line incidence structure such that any line is incident with at least two points and any two distinct points are incident with at most one line) satisfying the so-called \emph{generalised quadrangle axiom}:
given a point $x$ and line $\ell$ not incident with $x$, there is a unique line incident with $x$ that is concurrent with $\ell$.
The number of points of $\mathcal{Q}$ is $(s+1)(st+1)$, and the number of lines of $\mathcal{Q}$ is $(t+1)(st+1)$ \cite[1.2.1]{FGQ}. 

De Winter and K. Thas \cite{DeWinterThasRegular} showed that if $\mathcal{Q}$ admits a point-regular abelian group of automorphisms $N$, then $N$ is elementary abelian and $\mathcal{Q}$ is isomorphic to a generalised quadrangle arising from a pseudo-hyperoval. 
We give details as to what this conclusion means by retracing their steps, including some proofs for ease of reference. 
We first observe that \cite[Theorem 2.3]{DeWinterThasRegular} generalises as follows.

\begin{proposition} \label{order} 
Let $\mathcal{Q}$ be a thick generalised quadrangle of order $(s,t)$ admitting a point-regular automorphism group $N$ such that $|N|=p^d$ for some prime $p$ and some positive integer $d$. 
Then $d$ is divisible by $3$ and $(s,t) = (p^{d/3}-1,p^{d/3}+1)$.
\end{proposition}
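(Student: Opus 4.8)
\medskip

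The plan is to combine the combinatorial constraints imposed by the generalised quadrangle axioms with the existence of a $p$-group acting regularly on points, following the strategy of De Winter and K.~Thas but tracking the prime power carefully so as not to assume $p=2$ at the outset. First I would recall that a thick generalised quadrangle of order $(s,t)$ has $(s+1)(st+1)$ points, so that $p^d=(s+1)(st+1)$; since $N$ acts regularly, $|N|$ equals the number of points. The key number-theoretic observation is that $\gcd(s+1,st+1)=\gcd(s+1,st+1-t(s+1))=\gcd(s+1,1-t)=\gcd(s+1,t-1)$, and simultaneously $s+1$ and $st+1$ are both powers of $p$ (as divisors of $p^d$); two powers of the same prime are coprime only if one of them is $1$, and $st+1>1$ in the thick case, so either $s+1=1$ (impossible, $s\ge 2$) or the two factors share the prime $p$, forcing one of $s+1$, $st+1$ to divide the other. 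Pushing this through, one deduces $p\mid t-1$ and then, using the divisibility inequalities $s\le t^2$ and $t\le s^2$ of Higman (valid for thick GQs), pins down $s+1=p^a$, $st+1=p^b$ with $a+b=d$ and $b=2a$, which already gives $3\mid d$; writing $m=p^{d/3}$ one gets $s=m-1$ and then $st+1=m^2$ forces $t=m+1$.

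The cleanest route to the relation $b=2a$ is as follows. From $s+1=p^a$ we have $s=p^a-1$, so $st+1 = (p^a-1)t+1$; setting this equal to $p^b$ gives $(p^a-1)t = p^b-1$, i.e. $t=\frac{p^b-1}{p^a-1}$. For $t$ to be an integer we need $a\mid b$, say $b=ak$, and then $t = 1+p^a+p^{2a}+\cdots+p^{(k-1)a}$. Now invoke Higman's inequality $t\le s^2=(p^a-1)^2<p^{2a}$: this forces $k-1\le 1$, i.e. $k\in\{1,2\}$. The case $k=1$ gives $t=1$, contradicting thickness ($t\ge 2$), so $k=2$, whence $b=2a$, $d=a+b=3a$, $t=1+p^a=p^{d/3}+1$, and $s=p^a-1=p^{d/3}-1$, as claimed.

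The step I expect to be the main obstacle is justifying that \emph{both} $s+1$ and $st+1$ are powers of $p$ — equivalently, that the prime factorisation of $(s+1)(st+1)=p^d$ distributes as a genuine factorisation into two prime-power pieces, which is immediate once one knows the product is a prime power, but one must first be sure there is no arithmetic subtlety (for instance, $s+1$ and $st+1$ are not assumed coprime a priori, so I cannot split the factorisation by coprimality; I split it instead by the trivial fact that every divisor of $p^d$ is a power of $p$). A secondary point requiring care is the legitimacy of Higman's inequalities $s\le t^2$ and $t\le s^2$ for thick generalised quadrangles; these are standard (see \cite[1.2.2, 1.2.3]{FGQ}) and I would simply cite them. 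Finally, I should double-check the boundary cases $d/3=1$ (giving $(s,t)=(p-1,p+1)$, which is thick precisely when $p\ge 3$, and when $p=2$ gives $(s,t)=(1,3)$ — not thick, so the hypothesis implicitly forces $p$ odd unless $d/3\ge 2$); this is consistent with the statement, which only asserts the shape of $(s,t)$ and the divisibility $3\mid d$, not thickness for every $p$.
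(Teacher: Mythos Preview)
Your proof is correct and follows essentially the same route as the paper's: write $s+1=p^a$, use that $t=(p^{d-a}-1)/(p^a-1)$ is an integer to get the divisibility $a\mid d$, then squeeze $a$ into the interval forcing $a=d/3$. The only cosmetic difference is that you invoke Higman's inequality $t\le s^2$ directly to bound the quotient $b/a\le 2$, whereas the paper cites the derived bound $|\mathcal{P}|<(s+1)^4$ from \cite[Lemma~2.5(ii)]{BGMRS} to obtain $a>d/4$; these are equivalent, since that point-count bound is itself an immediate consequence of $t\le s^2$. Your first paragraph's detour through $\gcd(s+1,st+1)$ and $p\mid t-1$ is unnecessary---as you yourself note, every divisor of $p^d$ is a power of $p$, so $s+1=p^a$ is immediate---but the second paragraph is clean and complete.
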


\begin{proof}
Let $\mathcal{P}$ denote the point set of $\mathcal{Q}$. 
Then $|N| = |\mathcal{P}|$, namely $p^d = (s+1)(st+1)$. 
In particular, $s+1$ divides $p^d$, so $s+1=p^k$ for some $k$ and hence $t=(p^{d-k}-1)/(p^k-1)$. 
Since $t\ge 2$, $p^{d-2k} = p^{d-k}/p^k = (st+1)/(s+1) > 1$, and so $k<d/2$. 
On the other hand, $|\mathcal{P}| < (s+1)^4$ \cite[Lemma~2.5(ii)]{BGMRS}, so $p^d < p^{4k}$ and hence $k>d/4$. 
Moreover, $k$ divides $d$ because $t = (p^{d-k}-1)/(p^k-1)$ is an integer. 
Therefore, $k=d/3$, and hence $s=p^{d/3}-1$ and $t = (p^{2d/3}-1)/(p^{d/3}-1) = p^{d/3}+1$. 
\end{proof}

We now consider the situation where $\mathcal{Q}$ admits a point-regular abelian group of automorphisms $N$. 
Let $\mathrm{End}(N)$ denote the endomorphism ring of the abelian group $N$. 
Regarding $N$ as an additive group, the sum $f+g$ of $f,g \in \operatorname{End}(N)$ is defined by $(f+g)(a):=f(a)+g(a)$ for all $a\in N$, and the product $fg$ is given by function composition. 
We also make careful mention that we think of a vector space as a triple $(V,F,\varphi)$, where $V$ is an abelian group, $F$ is a field, and $\varphi$ is a ring homomorphism from $F$ into $\mathrm{End}(V)$ (that is,  `scalar multiplication' by elements of $F$). 
The following result of \cite{DeWinterThasRegular} asserts that $N$ can be identified with a projective space containing a pseudo-hyperoval. 
We include parts of the proof. 

\begin{proposition} \label{pseudohyperoval}
Let $\mathcal{Q}$ be a thick generalised quadrangle of order $(s,t)$ admitting a point-regular abelian automorphism group $N$. 
Choose a point $x$ of $\mathcal{Q}$, let $\ell_1,\ldots,\ell_{t+1}$ be the lines incident with $x$, and, for each $i\in\{1,\ldots,t+1\}$, let $U_i := N_{\ell_i}$ be the setwise stabiliser of $\ell_i$ in $N$. 
Then 
\[
K := \{ g \in \operatorname{End}(N) \mid U_i^g=U_i \text{ for all } i \in \{1,\ldots,t+1\} \}
\]
is a field, $N$ is a $K$-vector space of dimension $3n$ for some positive integer $n$, and $U_1,\ldots,U_{t+1}$ are $n$-dimensional $K$-vector subspaces of $N$ such that $N=U_i+U_j+U_k$ for every three distinct $i$, $j$ and $k$. 
\end{proposition}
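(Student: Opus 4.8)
The plan is to verify each assertion in turn, following De Winter and K.~Thas but filling in the details that make $K$ a field and identify the $U_i$ as subspaces. First I would show that $K$ is a subring of $\operatorname{End}(N)$: it clearly contains $0$ and $1$, and is closed under subtraction and composition because each $U_i$ is a subgroup of $N$ preserved by any $g\in K$ (so preserved by $g-h$ and by $gh$). The substantive point is that every nonzero $g\in K$ is invertible in $K$. Here I would use the geometry: the kernel of $g$ is a subgroup of $N$ that is contained in $U_i$ for every $i$ (since $U_i^g=U_i$ forces... more precisely one argues via the fixed structure that $\ker g\le U_i$ for all $i$), and a set of $t+1\ge 3$ subspaces any two of which meet only trivially — this is where I would invoke the pseudo-hyperoval-like intersection property, or rather its precursor: distinct $U_i,U_j$ satisfy $U_i\cap U_j=\{0\}$ because two lines through $x$ meet only in $x$ and $N$ acts regularly. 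Hence $\ker g\subseteq U_i\cap U_j=\{0\}$, so $g$ is injective, and since $N$ is finite $g$ is bijective; one then checks $g^{-1}\in K$ (it preserves each $U_i$ because $g$ does). A finite ring with no nonzero zero divisors and with $1$ is a division ring, hence by Wedderburn a field. I expect this step — pinning down exactly why $\ker g\le U_i$ for all $i$, using only that $g$ fixes each $U_i$ setwise and the incidence geometry of $\mathcal{Q}$ — to be the main obstacle, and I would look carefully at the argument in \cite[Theorem 2.3 and its proof]{DeWinterThasRegular}.

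Next, $N$ is automatically a $K$-vector space once $K$ is a field, via the inclusion $\varphi\colon K\hookrightarrow\operatorname{End}(N)$ from our triple viewpoint $(N,K,\varphi)$; and each $U_i$, being a subgroup of $N$ fixed setwise by every element of $K$, is a $K$-subspace. It remains to compute dimensions. Let $q=|K|$ and let $3n=\dim_K N$, so $|N|=q^{3n}$; by Proposition~\ref{order} applied with $p^d=|N|$ we get $|N|=p^d$ with $3\mid d$ and $s=p^{d/3}-1$, $t=p^{d/3}+1$. Matching $q^{3n}=p^d$ and using that $N$ being elementary abelian (as it is a vector space over the finite field $K$, so $K$ has prime characteristic $p$ and $q$ is a power of $p$) forces $s+1=q^n$ after one checks $n$ is the common $K$-dimension of the $U_i$: indeed the $q^n$ points on $\ell_i$ other than... rather, $N_{\ell_i}=U_i$ acts regularly on the $s+1$ points of $\ell_i$, so $|U_i|=s+1=q^n$, giving $\dim_K U_i=n$ for every $i$, and then $|N|=q^{3n}$ combined with $|N|=(s+1)(st+1)=q^n(st+1)$ yields $st+1=q^{2n}$, consistent with $t=q^n+1$...

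Finally, the spanning condition $N=U_i+U_j+U_k$ for distinct $i,j,k$: since $\dim_K U_i=n$ we have $\dim_K(U_i+U_j+U_k)\le 3n=\dim_K N$ with equality iff the sum is direct, so it suffices to show $U_i\cap U_j=\{0\}$ (already noted) together with $(U_i+U_j)\cap U_k=\{0\}$. The latter I would extract from the generalised quadrangle axiom: an element of $(U_i+U_j)\cap U_k$ corresponds, under the regular action, to a point lying in a configuration forced to collapse by the no-triangle (girth $8$) condition, exactly as in \cite{DeWinterThasRegular}. Once $U_i\oplus U_j\oplus U_k=N$ is established, counting confirms $st+1=q^{2n}$ and $t+1=q^n+2$, so $\{U_1,\dots,U_{t+1}\}$ is indeed a set of $q^n+2$ subspaces of dimension $n-1$ in $\PG(3n-1,q)$ any three of which span — a pseudo-hyperoval — completing the proof. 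Throughout, the routine verifications (closure of $K$, that $g^{-1}$ and $\varphi$ land in the right places, the dimension arithmetic) I would present compactly, citing \cite{DeWinterThasRegular} for the parts reproduced there.
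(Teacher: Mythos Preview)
Your plan tracks the paper's proof closely: both observe that $K$ is a subring of $\operatorname{End}(N)$, defer to \cite[Theorem~2.4]{DeWinterThasRegular} for the fact that $K$ is a field (the paper does not reproduce that argument either), read off the $K$-vector-space structure and the dimensions via $|U_i|=s+1$ together with Proposition~\ref{order}, and then prove $(U_i+U_j)\cap U_k=\{0\}$ from the geometry of $\mathcal{Q}$.

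One genuine slip in your sketch: from $U_i^g=U_i$ you cannot deduce $\ker g\le U_i$. If $g(U_i)=U_i$ then $g|_{U_i}$ is a bijection of $U_i$, giving $\ker g\cap U_i=\{0\}$, the \emph{opposite} containment; and since the $U_i$ are far from covering $N$, this alone does not force $\ker g=\{0\}$. The De Winter--Thas argument that $K$ is a field is subtler than your outline suggests, so your instinct to cite their proof rather than reinvent it is the right one, and matches the paper. For the spanning step, the paper makes your ``no-triangle'' idea explicit via a grid: the $(U_i+U_j)$-orbit of $x$ is the point set of a sub-generalised-quadrangle $\Gamma$ of order $(s,1)$ with line sets $\ell_i^{U_j}$ and $\ell_j^{U_i}$, and $U_i+U_j$ is its setwise stabiliser; any $\theta\in(U_i+U_j)\cap U_k$ then fixes both $\Gamma$ and $\ell_k$, so $x^\theta$ lies in $\Gamma$ and on $\ell_k$, whence $x^\theta=x$ and $\theta=1$ by regularity of $N$.
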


\begin{proof}
The abelian group $N$ is an $\operatorname{End}(N)$-module under the natural action, with the operations described above. 
By definition, $K$ is a subring of $\operatorname{End}(N)$ and so $N$ is a $K$-module. 
By \cite[Theorem 2.4]{DeWinterThasRegular}, $K$ is a field, and so $N$ is a $K$-vector space. 
Each subgroup $U_i$ is left invariant under scalar multiplication by elements of $K$ (by definition of $K$), and thus the $U_i$ are $K$-vector subspaces of $N$.
Write $|N|=p^d$ as in Proposition~\ref{order}. 
By \cite[Lemma 2.1]{DeWinterThasRegular}, the $U_i$ have cardinality $s+1$, and by Proposition~\ref{order}, $s+1=p^{d/3}$. 
Therefore, writing $|K|=p^f$ for some $f$, we have $d/3 = nf$ for some $n$. 
That is, $N$ has dimension $d/f=3n$ as a $K$-vector space, and the $U_i$ have dimension $d/(3f)=n$. 
Since $N$ is abelian, $U_i+U_j$ is a subgroup of $N$. 
By the generalised quadrangle axiom, $\ell_i$ and $\ell_j$ determine
a {\em grid} $\Gamma$, a sub-generalised quadrangle of order $(s,1)$ described as follows and illustrated in Figure~\ref{Fig1}.
The orbit $\ell_i^{U_j}$ consists of $s+1$ disjoint lines (one for each point on $\ell_j$), and $\ell_j^{U_i}$ is a set of $s+1$ disjoint lines transverse to $\ell_i^{U_j}$. 
The underlying set of points is thus the orbit of $x$ under $U_i+U_j$, and it is not difficult to see that
 $U_i+U_j$ is the setwise stabiliser of $\Gamma$. 
Now, $\ell_k$ is another line on the point $x$ and cannot belong to the lines of $\Gamma$ (since $\Gamma$ has just two lines on every point). 
An automorphism $\theta\in U_i+U_j$ that stabilises $\Gamma$ and the line $\ell_k$ must be trivial, as we see in the following.
Recall that $N$ acts regularly on the points of $\mathcal{Q}$. 
Now, $x^\theta$ must be a point of $\Gamma$, but also must be a point incident with $\ell_k$. 
Hence $x^\theta=x$, which implies that $\theta$ is trivial. Therefore, $(U_i+U_j)  \cap U_k$ is trivial. 
By orders, we see that $N=U_i+U_j+U_k$. 
\end{proof}

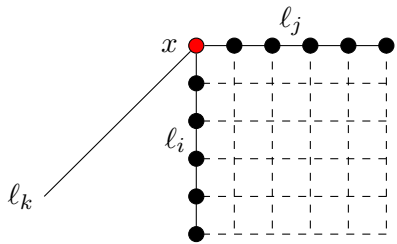
\begin{figure}[!t]
\caption{A grid stabilised by $U_i+U_j$.}
\label{Fig1}
\begin{tikzpicture}[scale =0.5]
\node (P) at (0,5) [circle, draw, fill=red!100, inner sep=2pt, minimum width=5pt, label=left:$x$] {}; 
\coordinate (A) at (0,0);
\coordinate (B) at (5,5);
\coordinate[label=above:$\ell_j$]  (C) at (2.5,5);
\coordinate[label=left:$\ell_i$]  (D) at (0,2.5);
\coordinate[label=left:$\ell_k$]  (E) at (-4,1);

\coordinate (A1) at (0,1);
\coordinate (A2) at (0,2);
\coordinate (A3) at (0,3);
\coordinate (A4) at (0,4);
\coordinate (aA1) at (5,1);
\coordinate (aA2) at (5,2);
\coordinate (aA3) at (5,3);
\coordinate (aA4) at (5,4);
\coordinate (aA5) at (5,0);
\coordinate (B1) at (1,0);
\coordinate (B2) at (2,0);
\coordinate (B3) at (3,0);
\coordinate (B4) at (4,0);
\coordinate (B5) at (5,0);
\coordinate (aB1) at (1,5);
\coordinate (aB2) at (2,5);
\coordinate (aB3) at (3,5);
\coordinate (aB4) at (4,5);
\coordinate (aB5) at (5,5);
\draw (P) -- (A);
\draw (P) -- (B);
\draw (P) -- (E);
\draw[dashed] (aA1) -- (A1);
\draw[dashed] (aA2) -- (A2);
\draw[dashed] (aA3) -- (A3);
\draw[dashed] (aA4) -- (A4);
\draw[dashed] (aA5) -- (A);

\draw[dashed] (aB1) -- (B1);
\draw[dashed] (aB2) -- (B2);
\draw[dashed] (aB3) -- (B3);
\draw[dashed] (aB4) -- (B4);
\draw[dashed] (aB5) -- (B5);

\node at  (1,5) [circle, draw, fill=black!100, inner sep=2pt, minimum width=5pt] {}; 
\node at  (2,5) [circle, draw, fill=black!100, inner sep=2pt, minimum width=5pt] {}; 
\node at  (3,5) [circle, draw, fill=black!100, inner sep=2pt, minimum width=5pt] {}; 
\node at  (4,5) [circle, draw, fill=black!100, inner sep=2pt, minimum width=5pt] {}; 
\node at  (5,5) [circle, draw, fill=black!100, inner sep=2pt, minimum width=5pt] {}; 
\node at  (0,1) [circle, draw, fill=black!100, inner sep=2pt, minimum width=5pt] {}; 
\node at  (0,2) [circle, draw, fill=black!100, inner sep=2pt, minimum width=5pt] {}; 
\node at  (0,3) [circle, draw, fill=black!100, inner sep=2pt, minimum width=5pt] {}; 
\node at  (0,4) [circle, draw, fill=black!100, inner sep=2pt, minimum width=5pt] {}; 
\node at  (0,0) [circle, draw, fill=black!100, inner sep=2pt, minimum width=5pt] {}; 
\end{tikzpicture}
\end{figure}

\begin{corollary} \label{pseudohyperovalCorr}
With assumptions as in Proposition~$\ref{pseudohyperoval}$, the set of $1$-dimensional subspaces of $N$ can be identified with $\PG(3n-1,2^f)$, and the set $\{ U_1,\ldots,U_{t+1} \}$ is a pseudo-hyperoval. 
\end{corollary}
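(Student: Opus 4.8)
The plan is to check directly that the projective image of $\{U_1,\ldots,U_{t+1}\}$ satisfies every clause of the definition of pseudo-hyperoval recalled in the introduction, and then to invoke J.~A. Thas's theorem to name the ground field. By Proposition~\ref{pseudohyperoval}, $N$ is a $3n$-dimensional vector space over the field $K$ with $|K|=p^f$; writing $q:=p^f$, the $1$-dimensional $K$-subspaces of $N$ are, by definition, the points of $\PG(3n-1,q)$, and the $U_i$ --- being $n$-dimensional $K$-subspaces by the same proposition --- correspond to $(n-1)$-dimensional subspaces of this projective space.

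Next I would verify the cardinality and spanning conditions. Since $|N|=p^d$ with $d=3nf$, Proposition~\ref{order} gives $t+1=p^{d/3}+2=p^{nf}+2=q^n+2$, so there are $q^n+2$ subspaces $U_i$, provided these are pairwise distinct. Distinctness follows from the relation $N=U_i+U_j+U_k$ in Proposition~\ref{pseudohyperoval}: if $U_i=U_j$ with $i\neq j$, then, choosing a third index $k$ (possible since $t+1\ge 3$ as $\mathcal{Q}$ is thick), that relation would force $3n=\dim N=\dim(U_i+U_k)\le 2n$, a contradiction. The spanning condition is then just a restatement of the same relation: $N=U_i+U_j+U_k$ for all distinct $i,j,k$ says exactly that any three of the corresponding $(n-1)$-dimensional projective subspaces span $\PG(3n-1,q)$. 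Hence $\{U_1,\ldots,U_{t+1}\}$ is a pseudo-hyperoval of $\PG(3n-1,q)$.

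It remains only to pin down $q$. By the theorem of J.~A. Thas \cite[\S4.5]{Thas71} quoted in the introduction, a projective space $\PG(3n-1,q)$ that carries a pseudo-hyperoval must have $q$ even; hence $p=2$ and $q=2^f$, and the points of $N$ form $\PG(3n-1,2^f)$, as claimed. I expect no real obstacle here: all the substantive work is already contained in Propositions~\ref{order} and~\ref{pseudohyperoval}, and the only points needing a moment's attention are the distinctness of the $U_i$ (needed for the cardinality count to be meaningful) and the passage from the additive-group / vector-space language of those propositions to the projective language of the definition, after which the parity of $q$ is handed to us by Thas's theorem.
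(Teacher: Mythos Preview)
Your proposal is correct and follows essentially the same approach as the paper: the paper's one-line proof simply takes the verification of the pseudo-hyperoval axioms (cardinality, dimension, spanning) as immediate from Propositions~\ref{order} and~\ref{pseudohyperoval}, and records only the remaining observation that Thas's theorem forces $p=2$. You have merely made explicit the details the paper leaves to the reader, including the distinctness of the $U_i$ and the count $t+1=q^n+2$.
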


\begin{proof}
As noted in the introduction, the existence of a pseudo-hyperoval implies that $p=2$.
\end{proof}

Before proving Theorem~\ref{transitivepseudohyperoval}, we need the following corollary to \cite[Proposition 84.1]{SnapperTroyer}.

\begin{lemma} \label{GactsonN}
Let $V$ be a vector space and suppose that $G$ is a group of automorphisms of the additive group of $V$. 
If $G$ preserves the $1$-dimensional subspaces of $V$, then $G$ is a subgroup of the group $\mathrm{\Gamma L}(V)$ of nonsingular semilinear maps on $V$.
\end{lemma}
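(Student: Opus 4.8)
The plan is to prove the apparently stronger but in fact equivalent statement that \emph{every} element of $G$ is, by itself, a nonsingular semilinear map of $V$; the assertion $G\le\GammaL(V)$ then follows at once, since $\GammaL(V)$ is a group under composition and $G$ is assumed to be one. So fix $g\in G$. As an automorphism of the additive group, $g$ is a bijection of $V$, and by hypothesis it permutes the set of $1$-dimensional subspaces; in particular $g(\langle u\rangle)=\langle g(u)\rangle$ for every nonzero $u$, so there is a bijection $\phi_u\colon F\to F$ of the underlying field with $g(\lambda u)=\phi_u(\lambda)g(u)$ for all $\lambda$. Using additivity of $g$ together with surjectivity of $\phi_u$ and $\phi_v$, one checks that $g(\langle u,v\rangle)=\langle g(u),g(v)\rangle$ whenever $\langle u\rangle\neq\langle v\rangle$, and that this image is again $2$-dimensional. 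Hence $g$ induces a collineation $\bar g$ of the projective space $\PG(V)$ whose points are the $1$-dimensional subspaces and whose lines are the $2$-dimensional subspaces.

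Next I would apply the Fundamental Theorem of Projective Geometry in the form of \cite[Proposition 84.1]{SnapperTroyer}: assuming $\dim V\ge3$ --- the only case needed here, since in every application $V=N$ has dimension $3n$ --- the collineation $\bar g$ is induced by some nonsingular semilinear map $h\colon V\to V$, meaning $\langle h(v)\rangle=\bar g(\langle v\rangle)$ for all nonzero $v$. It then remains to recover $g$ itself from $h$. Since $g$ and $h$ induce the same collineation, for each nonzero $v$ there is a scalar $\mu(v)$ with $g(v)=\mu(v)h(v)$; comparing $g(u+v)=g(u)+g(v)=\mu(u)h(u)+\mu(v)h(v)$ with $g(u+v)=\mu(u+v)\bigl(h(u)+h(v)\bigr)$ for linearly independent $u,v$, and using that $h(u),h(v)$ are then also independent, forces $\mu(u)=\mu(v)=\mu(u+v)$. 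Since any two nonzero vectors of $V$ are joined by a chain of such independent pairs when $\dim V\ge2$, the function $\mu$ is constant, so $g$ is a fixed scalar multiple of the semilinear map $h$, hence itself semilinear, and $g\in\GammaL(V)$ as required.

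I expect the only real work to be this last passage from the collineation $\bar g$ back to the linear map $g$ --- the scalar-ambiguity argument --- since everything preceding it is routine additivity manipulation and the heart of the matter is the black box \cite[Proposition 84.1]{SnapperTroyer}. The one genuine caveat is the low-dimensional behaviour: the statement should be read with $\dim V\ge2$ in mind (it is applied here only with $\dim V=3n\ge3$), and the borderline case $\dim V=2$, in which $\PG(V)$ is a projective line and the unrestricted Fundamental Theorem fails, must instead be handled directly by the additivity computation above --- choosing a basis $e_1,e_2$, extracting a field automorphism $\sigma$ from the requirement that each point $\langle e_1+\lambda e_2\rangle$ maps to $\langle g(e_1)+\sigma(\lambda)g(e_2)\rangle$, and verifying that $g$ is then $\sigma$-semilinear.
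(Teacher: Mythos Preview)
Your argument is correct, and in fact it supplies what the paper deliberately omits: the paper gives no proof of this lemma at all, simply recording it as ``the following corollary to \cite[Proposition~84.1]{SnapperTroyer}''. Your proposal unpacks precisely that corollary --- passing from the additive automorphism $g$ to a collineation of $\PG(V)$, invoking the Fundamental Theorem of Projective Geometry from Snapper--Troyer, and then removing the projective scalar ambiguity via the additivity of $g$ --- so your route and the paper's implicit one coincide. Your attention to the low-dimensional edge cases ($\dim V\le 2$) is also well placed, even though, as you observe, only $\dim V=3n\ge 3$ is needed in the application.
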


Now consider the situation outlined by Theorem~\ref{transitivepseudohyperoval}. 
Then $H=NH_x$ and $N\cap H_x$ is trivial. 
The conjugation action of $H$ on $N$ gives a homomorphism $\psi:H\to \mathrm{Aut}(N)$, and $N\le \ker(\psi)$ as $N$ is abelian.
Hence $\psi(H_x)=\psi(H)$, and so we may identify $H$ with the semidirect product $N\rtimes G$, where $G$ is the subgroup of $\operatorname{Aut}(N)$ induced by the conjugation action of $H$ (or indeed $H_x$). Therefore, $G$ permutes the $U_i$. 
Conversely, the \emph{holomorph} $\mathrm{Hol}(N) = N\rtimes \mathrm{Aut}(N)$ of $N$ has a natural action on $N$, where $N$ acts 
regularly by right multiplication (and $\mathrm{Aut}(N)$ has the obvious action), and we can recover the subgroup $\mathrm{Aut}(N)$ by considering the stabiliser in $\mathrm{Hol}(N)$ of the trivial element of $N$.

Now let $k \in K^\times$, $y\in N$, and $g\in G$. 
Given $i\in\{1,\ldots,t+1\}$, there exists $j$ such that $U_i^{g^{-1}}=U_j$, and so
\[
U_i^{g^{-1}kg}=(U_j^k)^g=U_j^g=U_i.
\] 
Therefore, $g^{-1} k g \in K$. Write $k\cdot y$ for the action of the scalar $k$ on the element $y$.
Note that if $k$ is nonzero then it is an automorphism of $N$, and so $k\cdot y = y^k$.
We have 
\[
(k\cdot y)^g = (y^k)^g=(y^g)^{g^{-1} k g}=(k^g)\cdot(y^g).
\] 
Since also $(0\cdot y)^g = 0\cdot y^g = 0$, it follows that the $1$-dimensional $K$-vector subspace $\langle y \rangle_{K}^g$ is equal to $\langle y^g \rangle_{K}$, and hence $G$ permutes $1$-dimensional $K$-subspaces of $N$.
Thus $G$ preserves a $K$-vector space structure on $N$. 

\begin{proof}[Proof of Theorem~\ref{transitivepseudohyperoval}]
Clearly (ii) implies (i), and (iii) is equivalent to (ii), so it remains to show that (i) implies (ii). 
If $H$ is transitive on the line set $\mathcal{L}$ of $\mathcal{Q}$ then the size of the orbits $\ell^N$, where $\ell \in \mathcal{L}$, of $N$ on $\mathcal{L}$ is constant (because $N$ is normal in $H$). 
Therefore, the line stabilisers $N_\ell$ have constant size, namely the size of the $U_i$. 
That is, $|N_\ell| = s+1$, and hence $N_\ell$ acts transitively on the points of $\ell$. 
Thus $H$ is flag-transitive. 
\end{proof}

We compare Theorem~\ref{transitivepseudohyperoval} to an elegant result of Bayens and Pentilla \cite{BayensPenttila} which says the elation Laguerre plane $L(\mathcal{O})$ arising from a dual pseudo-oval $\mathcal{O}$ is flag-transitive if and only if $\mathcal{O}$ is transitive. 
We also remark that it would suffice to assume that $N$ is transitive: a transitive faithful abelian group must act regularly, and $N$, indeed $H$, acts faithfully on points as it is a subgroup of $\operatorname{Aut}(\mathcal{Q})$ (if an element of $H$ fixed each point then it would fix each line and so be the identity automorphism of $\mathcal{Q}$).

\section{Proof of Proposition~\ref{propn:elementary}} \label{sec3}

We recall some definitions before proving Proposition~\ref{propn:elementary}. 
First, let $E$ be a degree $b$ field extension of a finite field $F$. 
The $d$-dimensional vector space $V_d(F)$ over $F$ can be thought of as a $d/b$-dimensional vector space over $E$ if one identifies the $d/b$-vectors of $F^{b}$ with the $d$-vectors of $F$.
Hence, the linear transformations of $V_{d/b}(E)$ induce linear transformations of $V_d(F)$ (but not conversely), and we obtain an embedding $\mathrm{GL}_{d/b}(E)\le \mathrm{GL}_d(F)$.
This induces a map $\varphi$ from subspaces of $\PG(d/b-1,E)$ to subspaces of $\PG(d-1,F)$, with the following properties \cite[Lemma~2.2]{fieldreduction}:
(i) $\varphi$ is injective; 
(ii) $\varphi$ maps $(t-1)$-dimensional (projective) subspaces to $(bt-1)$-dimensional subspaces; 
(iii) $\varphi$ is incidence preserving; and 
(iv) any two distinct elements of the image of $\varphi$ are disjoint. 
In particular, if $d=3b$ then the {\em field-reduced image} $\varphi(\mathcal{H})$ in $\PG(3b-1,q)$ of a hyperoval $\mathcal{H}$ in $\PG(2,q^b)$ is a pseudo-hyperoval. 
Such a pseudo-hyperoval is said to be {\em elementary}. 

Corollary~\ref{mainCorollary} refers to a construction of a generalised quadrangle $T_2^*(\mathcal{H})$ from a hyperoval $\mathcal{H}$. 
We now recall a more general construction, known as {\em generalised linear representation}. 
Let $\mathcal{S}$ be a nonempty set of disjoint $m$-dimensional subspaces of $\PG(r,q)$, $r\ge 2$, and embed $\PG(r,q)$ as a hyperplane $\Pi$ in $\PG(r+1, q)$. 
The {\em generalised linear representation} $T_{r,m}^*(\mathcal{S})$ of $\mathcal{S}$ is the point--line incidence structure with `points' the points of $\PG(r+1, q)$ not in $\Pi$, `lines' the $(m+1)$-subspaces of $\PG(r+1, q)$, not in $\Pi$, that are incident with some element of $\mathcal{S}$, and natural incidence \cite[Section 3.2]{DWRVdV}. 
Observe that $T_{2,0}^*$ is the same as $T_2^*$. 
We can also view the affine structure $\PG(r+1,q)\setminus \PG(r,q)$
directly within the underlying vector space $V(r+1,q)$. 
The hyperplane at infinity is regarded as the set of points with homogeneous coordinates satisfying $x_{r+2}=0$, and the points not in this hyperplane have $x_{r+2}=1$. 
Truncating to the first $r+1$ coordinates yields a bijection between $V(r+1,q)$ and $\PG(r+1,q)\setminus \PG(r,q)$. 
It is then straightforward to show that $T_{r,m}^*(\mathcal{S})$ is isomorphic to the point--line incidence structure with `points' the vectors of $V(r+1,q)$, `lines' the right cosets of the $(m+1)$-dimensional vector subspaces given by $\mathcal{S}$, and incidence being natural inclusion. 

Now, $T_{r,m}^*(\mathcal{S})$ is, in general, a partial linear space of order $(q^{m+1}-1,|\mathcal{S}|-1)$. 
If $\mathcal{S}$ is a pseudo-hyperoval of $\PG(3b-1,q)$ then $T^*_{3b-1,b-1}(\mathcal{S})$ is a generalised quadrangle. 
The following result, which can be found in \cite[Stelling 1.7.5]{Vanhove}, implies, in particular, that a hyperoval $\mathcal{H}$ and the pseudo-hyperoval obtained from $\mathcal{H}$ by field reduction yield isomorphic generalised quadrangles.

\begin{lemma} \label{lem:Tstar}
Let $\mathcal{S}$ be a set of disjoint $m$-dimensional subspaces of $\PG(r,q^b)$, and let $\mathcal{S}'$ be the image of $\mathcal{S}$ under field reduction to $\PG((r+1)b-1,q)$. Then
\[
T^*_{r,m}(\mathcal{S})\cong T^*_{r',m'}(\mathcal{S}'), \quad \text{where } (r',m')=((r+1)b-1,(m+1)b-1).
\]
\end{lemma}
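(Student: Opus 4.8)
The plan is to exhibit an explicit incidence-preserving bijection between the point--line incidence structures $T^*_{r,m}(\mathcal{S})$ and $T^*_{r',m'}(\mathcal{S}')$, using the vector-space description of generalised linear representations given just before the statement. Recall that $T^*_{r,m}(\mathcal{S})$ is isomorphic to the incidence structure whose points are the vectors of $V(r+1,q^b)$, whose lines are the right cosets of the $(m+1)$-dimensional $E$-subspaces comprising $\mathcal{S}$ (where $E=\F_{q^b}$), and whose incidence is inclusion; similarly $T^*_{r',m'}(\mathcal{S}')$ is the incidence structure on the vectors of $V(r'+1,q)=V((r+1)b,q)$ with lines the right cosets of the $(m'+1)=(m+1)b$-dimensional $\F_q$-subspaces comprising $\mathcal{S}'$.

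First I would fix the $\F_q$-linear identification $\iota\colon V(r+1,q^b)\to V((r+1)b,q)$ underlying field reduction, namely the one that identifies an $(r+1)$-tuple over $E$ with the $(r+1)b$-tuple over $F$ obtained by expanding each coordinate in a fixed $\F_q$-basis of $E$; this is exactly the map inducing $\GL_{r+1}(E)\le\GL_{(r+1)b}(F)$ described in the excerpt, and it is an additive-group isomorphism. This $\iota$ is the bijection on point sets. Second, I would check that $\iota$ carries lines to lines: by definition of field reduction, for each $U\in\mathcal{S}$ the image $\iota(U)$ is precisely the element of $\mathcal{S}'$ corresponding to $U$ under $\varphi$, and since $\iota$ is additive it sends the right coset $U+v$ to $\iota(U)+\iota(v)$, which is the corresponding right coset in $\mathcal{S}'$. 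Third, incidence is preserved in both directions because $v\in U+w$ if and only if $\iota(v)\in\iota(U)+\iota(w)$, again by additivity and injectivity of $\iota$. This establishes the isomorphism, and one notes that $(r',m')=((r+1)b-1,(m+1)b-1)$ is forced: the image space is $\PG((r+1)b-1,q)$ by property~(ii) of $\varphi$, and an $m$-dimensional subspace maps to a $(bm+b-1)=((m+1)b-1)$-dimensional subspace, also by~(ii).

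There is essentially no hard part here; the only point requiring a little care is to make sure the two ``vector-space models'' being compared really are the ones the statement intends — that is, that the embedding $\PG(r,q^b)\hookrightarrow\PG(r+1,q^b)$ as the hyperplane at infinity, after field reduction, matches the embedding $\PG((r+1)b-1,q)\hookrightarrow\PG((r+1)b,q)$, so that $\iota$ extends to the ambient affine spaces compatibly. This is handled by applying field reduction with $d=(r+2)b$ to the whole of $\PG(r+1,q^b)$: the resulting map $\varphi$ restricts on the hyperplane $\Pi$ to the field reduction taking $\PG(r,q^b)$ to $\PG((r+1)b-1,q)$, and it sends $\Pi$ into a hyperplane of $\PG((r+1)b,q)$ (using that $\varphi$ is incidence-preserving and maps hyperplanes to subspaces of codimension $b$ which can be completed appropriately), so truncating the last coordinate over $F$ agrees with first truncating the last coordinate over $E$ and then applying $\iota$. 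Once this compatibility is recorded, the three bulleted checks above complete the proof.
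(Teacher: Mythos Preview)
The paper does not give its own proof of this lemma; it simply attributes the result to \cite[Stelling~1.7.5]{Vanhove} and moves on. Your argument is correct and is exactly the one the vector-space description preceding the lemma is set up to enable: the additive identification $\iota\colon V(r+1,q^b)\to V((r+1)b,q)$ is a bijection on points, field reduction by definition sends each $U\in\mathcal{S}$ to the corresponding element of $\mathcal{S}'$, and additivity of $\iota$ then carries each coset $U+v$ to $\iota(U)+\iota(v)$, giving a bijection on lines that preserves incidence both ways.

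Your final paragraph, on compatibility of the hyperplane-at-infinity embeddings under field reduction, is unnecessary. The point of the vector-space model (established in the paragraph before the lemma) is precisely that it has already discarded the ambient projective embedding: both $T^*_{r,m}(\mathcal{S})$ and $T^*_{r',m'}(\mathcal{S}')$ are, \emph{by that paragraph}, isomorphic to coset geometries on the vectors of $V(r+1,q^b)$ and $V((r+1)b,q)$ respectively, with lines the cosets of the relevant subspaces. No residual compatibility with $\PG(r+1,q^b)\hookleftarrow\Pi$ needs to be checked, and the discussion of extending $\varphi$ to $\PG(r+1,q^b)$ and matching hyperplanes can be dropped without loss.
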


\begin{corollary} \label{cor:Tstar}
Let $\mathcal{H} \subset \PG(2,q^b)$ be a hyperoval and $\mathcal{O} \subset \PG(3b-1,q)$ the pseudo-hyperoval obtained from $\mathcal{H}$ via field reduction. 
Then the generalised quadrangles $T_2^*(\mathcal{H})$ and $T_{3b-1,b-1}^*(\mathcal{O})$ are isomorphic.
\end{corollary}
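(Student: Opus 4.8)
The plan is to derive Corollary~\ref{cor:Tstar} as a direct specialisation of Lemma~\ref{lem:Tstar}. First I would recall that a hyperoval $\mathcal{H} \subset \PG(2,q^b)$ is precisely a set of $q^b+2$ disjoint $0$-dimensional subspaces (points) of $\PG(2,q^b)$, so we may apply Lemma~\ref{lem:Tstar} with $r=2$, $m=0$ and this choice of $\mathcal{S}=\mathcal{H}$. The image $\mathcal{S}'$ under field reduction to $\PG((r+1)b-1,q) = \PG(3b-1,q)$ is, by definition, exactly the field-reduced pseudo-hyperoval $\mathcal{O}$.

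Next I would substitute the parameters into the conclusion of Lemma~\ref{lem:Tstar}: with $r=2$ and $m=0$ we get $(r',m') = ((r+1)b-1,(m+1)b-1) = (3b-1,b-1)$, so the lemma yields
\[
T^*_{2,0}(\mathcal{H}) \cong T^*_{3b-1,b-1}(\mathcal{O}).
\]
Finally I would invoke the observation already made in the text that $T_{2,0}^*$ is the same as $T_2^*$, so the left-hand side is $T_2^*(\mathcal{H})$, giving $T_2^*(\mathcal{H}) \cong T_{3b-1,b-1}^*(\mathcal{O})$ as required. One might also remark for completeness that $T_2^*(\mathcal{H})$ is indeed a generalised quadrangle (by the construction recalled in the introduction, since $\mathcal{H}$ is a hyperoval), and correspondingly $T_{3b-1,b-1}^*(\mathcal{O})$ is a generalised quadrangle because $\mathcal{O}$ is a pseudo-hyperoval of $\PG(3b-1,q)$, as noted just before Lemma~\ref{lem:Tstar}.

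There is essentially no obstacle here: the only point requiring a moment's care is checking that a hyperoval genuinely fits the hypothesis of Lemma~\ref{lem:Tstar} as a family of disjoint $m$-dimensional subspaces with $m=0$ — that is, that distinct points of a hyperoval are "disjoint" subspaces, which is trivially true — and that the field-reduction map of Lemma~\ref{lem:Tstar} applied to points is the same field-reduction map used to define $\mathcal{O}$ in Section~\ref{sec3}. Both are immediate from the definitions, so this corollary is a one-line deduction once Lemma~\ref{lem:Tstar} is in hand.
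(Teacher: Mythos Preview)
Your proposal is correct and matches the paper's intended approach: the corollary is stated without proof because it is an immediate specialisation of Lemma~\ref{lem:Tstar} with $r=2$, $m=0$, exactly as you describe. The only content is the parameter substitution and the identification $T_{2,0}^* = T_2^*$, both of which you handle correctly.
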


\begin{proof}[Proof of Proposition~\ref{propn:elementary}]
Recall first the result of Korchmaros~\cite{Korch} which says that if $\mathcal{H}$ is a transitive hyperoval of $\PG(2,q)$ then either (i) $q\in\{2,4\}$ and $\mathcal{H}$ is the regular hyperoval, or (ii) $q=16$ and $\mathcal{H}$ is the Lunelli--Sce hyperoval. 
In particular, taking $q=2$ proves the proposition for $nf=1$. 

Now suppose that $2 \le nf \le 4$. 
The field reduction $\PG(3n-1,2^f) \to \PG(3nf-1,2)$ maps pseudo-hyperovals to pseudo-hyperovals, so to prove that every pseudo-hyperoval of $\PG(3n-1,2^f)$ is elementary, it suffices to prove that every pseudo-hyperoval of $\PG(3nf-1,2)$ is elementary. 
If we remove an element from a pseudo-hyperoval of $\PG(3nf-1,2)$, we obtain a pseudo-oval. 
Conversely, by a theorem of J. A. Thas \cite[\S4.10]{Thas71}, every pseudo-oval of $\PG(3nf-1,2)$ extends to a unique pseudo-hyperoval.
The pseudo-ovals of $\PG(3nf-1,2)$ are known, and we can therefore show that the pseudo-hyperovals are all elementary and determine which 
have a transitive stabiliser. 

Consider first the case $nf=2$. 
As explained in Section~4 of Penttila's unpublished manuscript~\cite{Penttila}, there is a unique pseudo-oval of $\PG(5,2)$.
Thus, by Thas' result \cite{Thas71}, $\PG(5,2)$ contains a unique pseudo-hyperoval: the pseudo-hyperconic, that is, the field-reduced image of the hyperconic (regular hyperoval) of $\PG(2,4)$. 
By Korchmaros' result \cite{Korch}, the hyperconic of $\PG(2,4)$ has a transitive stabiliser, and so the pseudo-hyperconic also has a transitive stabiliser. 

Now let $nf=3$. 
By \cite[Theorem~3]{Penttila}, there are precisely two pseudo-ovals of $\PG(8,2)$ and both are elementary: a pseudo-conic and a pseudo-pointed conic. 
Both of these pseudo-ovals complete to the same pseudo-hyperoval, namely the pseudo-hyperconic, and hence by Thas' result, this is the only pseudo-hyperoval of $\PG(8,2)$. 
By Korchmaros' result, the hyperconic $\mathcal{H}$ of $\PG(2,8)$ is intransitive, and we must show that the pseudo-hyperconic $\mathcal{O}$ of $\PG(8,2)$ is also intransitive. 
The stabiliser in $\PGL(2,8)$ of $\mathcal{H}$ contains $\PSL(2,8)$ and has precisely two orbits: a conic (9 points) and its nucleus (one point). 
Moreover, $\PSL(2,8)$ acts $3$-transitively on the conic.
If the stabiliser $G \le \PGL(9,2)$ of $\mathcal{O}$ acted transitively on the pseudo-hyperconic, then it would act $4$-transitively, and hence primitively.
The only $4$-transitive groups of degree $10$ are $A_{10}$ and $S_{10}$ (see \cite{Sims}), and we claim that $G$ cannot induce $A_{10}$ (nor $S_{10}$, therefore) on $\mathcal{O}$. 
Now, a collineation of $\PG(8,2)$ that stabilises $\mathcal{O}$ induces an automorphism of the generalised quadrangle $T_{8,2}^*(\mathcal{O})$, and by Corollary~\ref{cor:Tstar}, $T_{8,2}^*(\mathcal{O}) \cong T_2^*(\mathcal{H})$. 
Hence there is a homomorphism from $G$ to the automorphism group of $T_2^*(\mathcal{H})$, and $A_{10}$, being a simple group, must appear in the image of this homomorphism. 
However, $T_2^*(\mathcal{H})$ is isomorphic to the Payne derivation of the generalised quadrangle $W(3,8)$ (see \cite[3.2.6]{FGQ}), and Grunh\"ofer et~al.~\cite{Grundhofer} have shown that the automorphism group of the Payne derivation of $W(3,8)$ is a point stabiliser in $\mathrm{P\Gamma Sp}(4,8)$. 
This group therefore has the form $2^9 : (7 \cdot \PSL(2,8) \cdot 3)$, and hence, by divisibilty, cannot contain $A_{10}$ as a subgroup.

Finally, suppose that $nf=4$. 
By \cite[Theorem 6]{Penttila}, there are precisely three pseudo-ovals of $\PG(11,2)$ and each one is elementary: a pseudo-conic, a pseudo-pointed conic, and the field-reduced image of the Lunelli--Sce oval of $\PG(2,16)$. 
The first two examples complete to the pseudo-hyperconic, and the third completes to the field-reduced image of the Lunelli--Sce hyperoval. 
By Thas' result, these are the only pseudo-hyperovals of $\PG(11,2)$. 
By Korchmaros' result, the Lunelli--Sce hyperoval is transitive, and hence its field-reduced image is a transitive pseudo-hyperoval. 
It remains to show that the pseudo-hyperconic is intransitive. 
The stabiliser in $\PGL(2,16)$ of the hyperconic contains $\PSL(2,16)$ and has precisely two orbits: a conic (17 points) and its nucleus (one point). 
If the stabiliser in $\PGL(12,2)$ of the pseudo-hyperconic acted transitively on the pseudo-hyperconic, then it would act $4$-transitively, and hence primitively (as $\PSL(2,16)$ acts $3$-transitively on the hyperconic).
The only $4$-transitive groups of degree $18$ are $A_{18}$ and $S_{18}$ (see \cite{Sims}), but $A_{18}$ is not a subgroup of $\PGL(12,2)$ (see \cite{WagnerEven}). 
\end{proof}

\begin{remark} 
Instead of citing Penttila's unpublished manuscript \cite{Penttila}, we could prove Proposition \ref{propn:elementary} by using the results contained in 
Steinke's 2006 paper \cite{Steinke2006} and the references therein. 
By Steinke's work, and \cite[Theorem 2.4]{RVdV}, any pseudo-oval of $\PG(3m-1,2)$, $m\le 4$, is elementary. 
The hyperovals of $\PG(2,2^m)$, $m\le 4$, have been classified \cite{Hall216} and they are (up to equivalence) hyperconics or the Lunelli--Sce hyperoval of $\PG(2,16)$. 
If $m> 2$, the stabiliser of the hyperconic of $\PG(2,2^m)$ has two orbits on the elements of the hyperconic (that is, the conic and its nucleus), whilst the stabilisers of the hyperconic of $\PG(2,4)$ and the Lunelli--Sce hyperoval are transitive. 
So for example, if we consider the case $nf=4$, there are just three ovals of $\PG(2,16)$: the conic, the pointed conic, and the Lunelli--Sce oval. 
So by Thas' theorem \cite[\S4.10]{Thas71}, we have three pseudo-ovals of $\PG(11,2)$, and we proceed as we did in the proof above.
\end{remark}

\section{Completing the proof of Theorem~\ref{mainthm}: the case \texorpdfstring{$nf>4$}{}} \label{sec4}

We now prove that, for $nf>4$, there is no irreducible subgroup of $\GL(3n,2^f)$ that acts transitively on a set of size $2^{nf}+2$. 
This implies that there is no pseudo-hyperoval of $\PG(3n-1,2^f)$ with an irreducible transitive stabiliser for $nf>4$, and, together with Proposition~\ref{propn:elementary}, proves Theorem~\ref{mainthm}. 
For convenience, let us write $d = 3nf$ (as in the proof of Proposition~\ref{pseudohyperoval}) and apply field reduction. 
That is, we show that, for $d>12$ with $d$ divisible by $3$, no irreducible group $G \le \GL(d,2)$ admits a transitive permutation representation of degree $2^{d/3}+2$. 

Recall that a {\em primitive prime divisor} of $q^e-1$, for $q$ a prime power and $e$ a positive integer, is a prime that divides $q^e-1$ and does not divide $q^i-1$ for any $i<e$. 
Now take $e = 2d/3-2$ and observe that $2^{d/3}+2 = 2(2^{e/2}+1)$. 
Since $|G|$ is divisible by $2^{d/3}+2$, it is divisible by $2^{e/2}+1$, and hence by every primitive prime divisor of $(2^{e/2})^2-1 = 2^e-1$. 
Thus $|G|$ is divisible by the product $\Phi_e^*(q)$ of all the primitive prime divisors of $q^e-1$ (including multiplicities), where for us $q=2$. 
Since $d>12$, the exponent $e=2d/3-2$ satisfies $e>d/2$, and hence we are able to apply Bamberg and Penttila's refinement \cite[Theorem 3.1]{Bamberg:2008rr} of a result of Guralnick et al.~\cite[Main Theorem]{Guralnick:1999th} to determine the irreducible subgroups $G \le \GL(d,2)$ with order divisible by $\Phi_e^*(2)$. 
We then check whether the groups obtained from \cite[Theorem 3.1]{Bamberg:2008rr} admit a transitive permutation representation of degree $2^{d/3}+2$.

\subsection{Classical, imprimitive and symplectic type examples}

In the classical examples case, $G$ must contain one of the following as a normal subgroup: $\SL(d,2)$, $\Sp(d,2)$ or, for $d$ even, $\Omega^\pm(d,2)$. 
(The unitary case in \cite[Theorem 3.1]{Bamberg:2008rr} is excluded because $e$ must be odd while $e=2d/3-2$ is even, and the odd-dimensional orthogonal case is excluded because $dq$ must be odd while $q=2$.) 
However, if such a group $G$ acts transitively on $2^{d/3}+2$ points, then by \cite[Lemma~7.1]{Bamberg:2008rr} we must have $2^{d/3}+2 > (2^{d/2}+1)(2^{d/2-1}-1)$. 
This is impossible because $2^{d/2}+1\ge 2^{d/3}+2$ for all $d$ and $2^{d/2-1}-1 \ge 1$ for $d/2-1\ge 1$, namely for $d\ge 4$ and hence for $d>12$. 

In the imprimitive examples case, by \cite[Theorem 3.1]{Bamberg:2008rr}, $G \le \GL(1,2) \wr S_d = S_d$, $G$ acts primitively of degree $d$ and the only admissible values of $(e,d)$ with $d=3e/2+3$ and $d>12$ are $(10,18)$, $(12,21)$, $(18,30)$. 
For $d=18$, $21$, $30$, $|G|$ is divisible by $2^{d/3}+2=66$, $130$, $1026$, and hence by $11$, $13$, $17$, respectively. 
It follows from Jordan's Theorem \cite[Theorem 13.9]{Wielandt} that $G=A_d$ or $S_d$. 
However, we have assumed that $G$ acts transitively on $2^{d/3}+2$ points, so $G$ must have a subgroup of index $2^{d/3}+2$. 
Therefore, $G \neq A_d$ or $S_d$ because by \cite[Theorem 5.2A]{DM}, $A_d$ has no subgroup of index $2^{d/3}+2$ or $(2^{d/3}+2)/2$. 
(The three cases of \cite[Theorem 5.2A]{DM} do not occur: in case~(i), $r=1$ or $2$; in case~(ii), the index is greater than $1026$; and in case~(iii), $d$ is less than $18$.)

Symplectic type examples do not arise for $G \le \GL(d,q)$ when $q=2$: the only examples listed in \cite[Theorem~3.1]{Bamberg:2008rr} are in characteristic $3$ or $5$.

\subsection{Nearly simple examples}

In this case, $G$ is absolutely irreducible and the socle of $G$ is a nonabelian simple group. 
By \cite[Theorem 3.1]{Bamberg:2008rr}, the only sporadic example with $d>12$ has the socle of $G$ equal to $J_1$ and $(e,d)=(18,20)$, but in this case $e\neq 2d/3-2$. 
Similarly, no cross-characteristic examples or natural-characteristic examples satisfy both $d>12$ and $e=2d/3-2$. 

It remains to check the case where the socle of $G$ is an alternating group $A_m$ with $m\ge 5$. 
The only potential examples with $d>12$ are the deleted permutation module examples (all other alternating group examples have $d\le 8 < 12$). 
Here the characteristic is $2$ and $A_m \le G \le S_m \times Z = S_m$, where 
\[
m = \begin{cases}
d+1 & \text{if } 2 \text{ does not divide } m \\
d+2 & \text{if } 2 \text{ divides } m.
\end{cases}
\]
The only possibilities with $q=2$ and $e = 2d/3-2 > 6$ are for $e=10,12,18$. 
The case $e=12$, for which $d=21$, is excluded because $d$ is even by the above equation. 
The remaining values $(d,m) = (18,19), (18,20), (30,31), (30,32)$ are excluded upon checking that $A_m$ has no subgroups with index $2^{d/3}+2$ or $(2^{d/3}+2)/2$.

\subsection{Extension field examples}

Here there is a divisor $b \neq 1$ of $\operatorname{gcd}(d,e)$ such that $G$ preserves on $V(d,2)$ a field extension structure of a vector space $V(d/b,2^b)$, and $G \le \GammaL(d/b,2^b)$. 
The examples with $b=d=e+1$ in \cite[Theorem~3.1]{Bamberg:2008rr} do not arise for our $d = 3e/2+3$. 

For the remaining examples, \cite[Theorem~3.1]{Bamberg:2008rr} says that $\Phi^*_e(2)$ is coprime to $b$ and divides the order of $G\cap \GL(d/b,2^b)$, and that $G\cap \GL(d/b,2^b)$ satisfies the hypothesis of \cite[Theorem~3.1]{Bamberg:2008rr} with $d$, $e$ and $q$ replaced by $d/b$, $e/b$ and $q^b$, respectively (and with $q=2$ in our case). 
Hence we can apply \cite[Theorem~3.1]{Bamberg:2008rr} provided that $d/b>2$ and $e/b>2$, and one easily verifies that these conditions are satisfied for $d>12$ and $d=3e/2+3$. 
First observe that the symplectic type examples do not arise in even characteristic, the imprimitive examples do not arise over a field of non-prime order, and the extension field sub-examples do not arise because we may assume that $b$ is maximal.

\subsubsection*{Nearly simple examples}
The alternating group, deleted permutation module examples do not arise because $2^b$ is not prime. 
The other alternating group examples do not arise because
$2^b\kern-0.5pt\not\in\kern-0.5pt \{ 2,3,5,7,9,25 \}$. 
For the sporadic group examples, nominally the only admissible value of  $2^b$ is $4$, but then $(e/b,d/b) = (5,6)$ or $(9,9)$, both contradicting $d=3e/2+3$. 
Similarly, in the cross-characteristic examples the only admissible value of $2^b$ is $4$, and here $e/b=d/b \in \{5,6,9\}$, again contradicting $d=3e/2+3$. 
Finally, in the natural-characteristic examples with $p=2$ we have $(e/b,d/b) = (6,8)$, $(6,6)$ or $(4,4)$, and again none of these satisfy $d=3e/2+3$.

\subsubsection*{Classical examples} \label{sss:ext-class}
Finally, the classical examples are ruled out by applying \cite[Lemma~7.1]{Bamberg:2008rr} as follows. 
Let $H \le G$ be a point stabiliser in the degree $2^{d/3}+2$ action of $G$, so that $|G:H| = 2^{d/3}+2$. 
Write $\hat{G} := G\cap \GL(d/b,2^b)$ and $\hat{H} := H\cap \GL(d/b,2^b)$, and note that
\[
|\hat{G}:\hat{H}| = \frac{|G| |\GL(d/b,q^b)| / |G\cdot\GL(d/b,q^b)|}{|H| |\GL(d/b,q^b)| / |H\cdot\GL(d/b,q^b)|}. 
\]
That is,
\[
|\hat{G}:\hat{H}| = \frac{|G:H|}{x}, \quad \text{where } x := |G\cdot \GL(d/b,2^b) : H\cdot \GL(d/b,2^b)|. 
\]
Hence $\hat{G}$ contains a subgroup, $\hat{H}$, of index $(2^{d/3}+2)/x$ for some $x$. 
We know that $\Phi_e^*(2)$ divides $2^{e/2}+1$.
Moreover, $x$ divides $b$ and $b$ is coprime to $\Phi_e^*(2)$ (by \cite[Theorem~3.1]{Bamberg:2008rr}, as noted above), so $x$ is coprime to $\Phi_e^*(2)$ and hence $\Phi_e^*(2)$ also divides $2(2^{e/2}+1)/x = (2^{d/3}+2)/x$. 
In order to apply \cite[Lemma~7.1]{Bamberg:2008rr}, we must check that $(2^{d/3}+2)/x$ does not divide $2(2^b-1)\operatorname{gcd}(2,2^b-1) = 2(2^b-1)$. 
If it did, then $\Phi_e^*(2)$ would also divide $2(2^b-1)$ and therefore $2^b-1$ (because $\Phi_e^*(2)$ is odd), and this would force $b=e$, which is impossible because $b=\operatorname{gcd}(d,e)$ and $e=2d/3-2>d/2$ (since we assume that $d>12$). 
Therefore, we can apply \cite[Lemma~7.1]{Bamberg:2008rr} with $d$ and $q$ replaced by $d/b$ and $2^b$ to obtain the following bound on $|\hat{G}:\hat{H}| = (2^{d/3}+2)/x$:
\[
\frac{2^{d/3}+2}{x} > \frac{(2^{bd/2}+1)(2^{b(d/2-1)}-1)}{2^b-1}.
\]
This implies, in particular, that
\[
2^{d/3}+2 > \frac{(2^{bd/2}+1)(2^{b(d/2-1)}-1)}{2^b-1},
\]
which is not true for $d>12$, $b\ge 2$. This rules out the classical sub-examples of the extension field case and completes the proof of Theorem~\ref{mainthm}.

\section{Proof of Corollary~\ref{mainCorollary}} \label{corProof}


The hypothesis of Corollary~\ref{mainCorollary} is a special case of the situation described in Proposition~\ref{pseudohyperoval} and Corollary~\ref{pseudohyperovalCorr}. 
That is, we suppose that $\mathcal{Q}$ admits an automorphism group $H$ that acts {\em point-primitively} and has a point-regular abelian normal subgroup $N$. 
If $G$ denotes the group of automorphisms of $N$ induced by the conjugation action of $H$ (as in Section~\ref{prelim}), then $H \cong N \rtimes G$ and we can think of $G$ as the stabiliser of $0$ in the action of $H$ on $N$, so $G$ acts irreducibly on the $3n$-dimensional $K$-vector space $N$ (where $K$, $n$ are as in Proposition~\ref{pseudohyperoval}). 
Moreover, $H$ is assumed to act transitively on the lines of $\mathcal{Q}$, so by Theorem~\ref{transitivepseudohyperoval}, $G$ transitively permutes the $n$-dimensional $K$-vector subspaces comprising the pseudo-hyperoval $\mathcal{O} = \{U_1,\ldots,U_{t+1}\}$. 
Now, $\mathcal{Q}$ can be identified with $T_{3n-1,n-1}^*(\mathcal{O})$: the points of $\mathcal{Q}$ correspond to the vectors of $N$, and the lines of $\mathcal{Q}$ to the right cosets of the $U_i$ \cite[Lemma 1]{HM61}. 
Hence, by Corollary~\ref{cor:Tstar}, $\mathcal{Q} \cong T_2^*(\mathcal{H})$, where $\mathcal{H}$ is a hyperoval from which $\mathcal{O}$ is obtained via field reduction. 
By Theorem~\ref{mainthm}, $\mathcal{H}$ is either the regular hyperoval of $\PG(2,2)$, the regular hyperoval of $\PG(2,4)$, or the Lunelli--Sce hyperoval of $\PG(2,16)$. 
Corollary~\ref{mainCorollary} follows upon noting that the first of these examples is excluded as then $T_2^*(\mathcal{H})$ would have order $(1,3)$, contradicting the assumption that $\mathcal{Q}$ is thick.

\appendix
\section{Data for Remark~\ref{eg}} \label{app}

The following data were determined using the computer algebra system {\sf GAP} \cite{gap}. 
Let $\mathcal{O}$ be the pseudo-hyperoval of $\PG(11,2)$ obtained by field reduction of the Lunelli--Sce hyperoval of $\PG(2,16)$. 
The setwise stabiliser of $\mathcal{O}$ is $G = \langle a,b \rangle$, where the generators are given as $12\times 12$ matrices by
\[
a :=\kern-2pt \left[ \begin{array}{llllllllllll}
1&0&1&1&0&1&1&1&0&1&0&0\\
0&0&1&1&1&1&0&0&0&1&0&1\\
1&0&0&1&1&1&1&1&0&0&1&0\\
1&1&0&1&0&1&1&0&1&1&1&0\\
0&0&0&0&1&0&1&1&0&0&0&0\\
0&0&0&0&0&0&1&1&0&0&0&0\\
0&0&0&0&1&0&0&1&0&0&0&0\\
0&0&0&0&1&1&0&1&0&0&0&0\\
0&0&0&0&0&0&0&0&1&0&1&1\\
0&0&0&0&0&0&0&0&0&0&1&1\\
0&0&0&0&0&0&0&0&1&0&0&1\\
0&0&0&0&0&0&0&0&1&1&0&1
\end{array} \right]\kern-2pt,
b :=\kern-2pt \left[ \begin{array}{llllllllllll}
1&0&0&0&1&1&0&0&1&1&1&0\\[-1mm]
0&1&0&0&0&1&1&0&0&1&1&1\\
0&0&1&0&0&0&1&1&1&1&1&1\\
0&0&0&1&1&1&0&1&1&0&1&1\\
0&1&0&1&1&0&0&0&1&0&1&0\\
1&1&1&0&0&1&0&0&0&1&0&1\\
0&1&1&1&0&0&1&0&1&1&1&0\\
1&1&1&1&0&0&0&1&0&1&1&1\\
1&0&0&0&1&1&1&0&0&0&1&0\\
0&1&0&0&0&1&1&1&0&0&0&1\\
0&0&1&0&1&1&1&1&1&1&0&0\\
0&0&0&1&1&0&1&1&0&1&1&0
\end{array} \right].
\]

A representative element of $\mathcal{O}$ is the row space of the $4\times 12$ matrix $[{\bf 1}_{4\times 4} \; {\bf 0}_{4\times 4} \; {\bf 0}_{4\times 4}]$. 
The setwise stabiliser $G=\langle a,b\rangle$ has order $2160=2^4\cdot3^3\cdot5$. 
It has a normal extraspecial subgroup~$M$ of order~$27$ and exponent~$3$. 
A power--conjugate presentation for $G/M$ is
\[
G/M=\langle x,y,z\mid x^2=y^8=z^5=1,\ y^x=y,\ z^x=z^{-1},\ z^y=z^3\rangle 
\cong C_5 \rtimes (C_2\times C_8),
\]
where $x=aba^5b^2M$, $y=aM$, and $z=b^{12}M$.
The natural module $\F_2^{12}$, viewed as an $\F_2 M$-module, equals $U\oplus U$ where $U$ is irreducible but not absolutely irreducible, and ${\rm End}_{\F_2M}(U)=\F_4$.
Thus there are precisely $|\F_4|+1=5$ subspaces $U_1,\dots,U_5$ of $\F_2^{12}$ that are $6$-dimensional and fixed by~$M$.
Using {\sf GAP} we readily see that $G$ has precisely~$14$ subgroups that are transitive on $\mathcal{O}$, and seven of these are reducible on $\F_2^{12}$ (fixing one or five of the $U_i$).
Generators for the reducible transitive subgroups of $G$, and their orders, are listed in Table~\ref{tab1}.

\renewcommand{\arraystretch}{1.2}  
\begin{table}[!t]
\caption{Reducible transitive subgroups of $G = \langle a,b \rangle$ in Appendix~\ref{app}, where $c:=b^5$.}
\begin{tabular}{|l|l|}\hline
Order & Generators for reducible transitive subgroups \\
\hline
54 &  $\langle a^2c^3, a^2c^2a^2, (aca)^2 \rangle$,\quad $\langle a^2c, (aca)^2 \rangle$ \\
108 & $\langle c, (aca)^2  \rangle$,\hskip22mm $\langle c^2 ,a^2c, (aca)^2  \rangle$ \\
216 & $\langle a^2, a c  \rangle$,\hskip27mm $\langle a^2, c \rangle$ \\
432 & $\langle a, c  \rangle$ \\ \hline
\end{tabular}
\label{tab1}
\end{table}

\section*{Acknowledgments} 

We would like to thank Geertrui Van de Voorde and Tim Penttila for making preprints of their work available to us, and we are especially grateful to the former for spotting an error in an earlier draft. 
The first author acknowledges the support of the Australian Research Council Future Fellowship FT120100036.
The second author acknowledges the support of the Australian Research Council Discovery Grant DP130100106. 
The second, third and fourth authors acknowledge the support of and the Australian Research Council Discovery Grant DP1401000416.

\end{document}